\newcommand{\N}{\mathbf{N}}
\newcommand{\esp}{\mathbf E}
\newcommand{\calP}{\mathcal P}
\newcommand{\PP}{(\P_t)_{t\geq0}}
\newcommand{\XX}{(X_t^x)_{t\geq0}}
\newcommand{\calL}{\mathcal L}
\newcommand{\diag}{\mathrm{diag}}
\newcommand{\R}{\mathbf{R}}
\newcommand{\Q}{\mathbf{Q}}
\newcommand{\inv}[1]{\ensuremath{\frac{1}{#1}}}
\newcommand{\Inv}{^{-1}}
\newcommand{\ent}{\ensuremath{\mathrm{Ent}}}
\renewcommand{\L}{\mathbf L}
\renewcommand{\P}{\mathbf P}
\theoremstyle{definition}
\newtheorem{defin}{Definition}[section]}
\theoremstyle{definition}
\theoremstyle{definition}
\newtheorem{req}[defin]{Remark}}
\theoremstyle{definition}
\newtheorem{pdefin}[defin]{Proposition-definition}}
\newtheorem{prop}[defin]{Proposition}
\newtheorem{thm}[defin]{Theorem}
\newtheorem{lem}[defin]{Lemma}
\newtheorem*{thm0}{Theorem}
\titleformat{\section}[block]{\scshape}{\thesection.}{3pt}{\filcenter}{}
\title{A Feynman-Kac approach for Logarithmic Sobolev Inequalities}
\author{Clément Steiner}
\begin{document}
\maketitle
\section*{Abstract}

\begin{center}
\begin{minipage}{0.9\textwidth}
	This note presents a method based on Feynman-Kac semigroups for logarithmic Sobolev inequalities. It follows the recent work of Bonnefont and Joulin on intertwining relations for diffusion operators, formerly used for spectral gap inequalities, and related to perturbation techniques. In particular, it goes beyond the Bakry-Émery criterion and allows to investigate high-dimensional effects on the optimal logarithmic Sobolev constant.	
	The method is illustrated on particular examples (namely Subbotin distributions and double-well potentials), for which explicit dimension-free bounds on the latter constant are provided. We eventually discuss a brief comparison with the Holley-Stroock approach.
\end{minipage}
\end{center}

\section{Introduction}{\let\thefootnote\relax\footnote{\noindent \textbf{Key words}: Diffusion processes, Feynman-Kac semigroups, Logarithmic Sobolev inequalities, Perturbed Functional Inequalities.\newline
Mathematics Subject Classification (2010): 39B62, 47D08, 60J60 }}

Since their introduction by Gross in 1975, the Logarithmic Sobolev Inequalities ($LSI$) became a widely used tool in infinite dimensional analysis. Initially studied in relation to the hypercontractivity property for Markov semigroups, they turned out to be prominent in many various domains, at the interface of analysis, probability theory and geometry (one of the best example of such prominence being their use in Perelmann's proof of Poincaré's conjecture in \cite{Per02}).

For $\mu$ a probability measure on the Euclidean space $(\R^d,|\cdot|)$, this inequality provides a control on the entropy of any smooth function $f$ in term of its gradient: 
\[\ent_\mu(f^2)\leq c\int_{\R^d}|\nabla f|^2d\mu,\]
for some $c>0$, where $\ent_\mu(f^2)=\int_{\R^d}f^2\log(f^2)\;d\mu-\left(\int_{\R^d}f^2\;d\mu\right)\log\left(\int_{\R^d}f^2\;d\mu\right)$. The optimal constant for the latter inequality to hold, often called the \emph{logarithmic Sobolev constant} and denoted $c_{LSI}(\mu)$, is of primary importance in the study of the measure $\mu$, since it encodes many of its properties. For instance, among many results in this area, Otto and Villani established in \cite{OV00} a connection between $LSI$ and some transportation inequalities (see also the related work by Bobkov and Götze in \cite{BG99}), and Herbst provided a powerful argument that links $LSI$ to Gaussian concentration inequalities (see the lecture notes by Ledoux \cite{Led99} for more details and his reference monograph \cite{Led01} about concentration of measure).

The case where $\mu$ is the invariant measure of some Markov process is also of great interest. For example, apart from Gross' initial results on hypercontractivity in \cite{Gro75}, $c_{LSI}(\mu)$ encodes the decay in entropy of the related semigroup, and is linked to the Fisher information (defined for a positive function $f$ as $\int_{\R^d} |\nabla\sqrt{f}|^2d\mu$) through de Bruijn's identity. Significant advances in this setting were due to Bakry and Émery in \cite{BE85}, who stated their eponymous criterion, also known as "curvature-dimension criterion", that connects the logarithmic Sobolev inequality (and many functional inequalities) to geometric properties of $\mu$. We refer to \cite{BGL14} for a comprehensive overview of this theory.\smallskip

Although the value of $c_{LSI}(\mu)$ is key in the study of $\mu$, its exact value is hardly ever known explicitly. Bakry-Émery theory provides sharp estimates on this constant for some log-concave measures, assumption that might be weakened according to some perturbation arguments. More precisely, although the Bakry-Émery criterion is defined in a more general situation, it can be reformulated conveniently in the Euclidean setting as follows.

\begin{thm0}[Bakry-Émery, \cite{BE85}]
Assume that $\mu(dx)\propto e^{-V(x)}dx$, for some smooth potential $V$. If there exists some $\rho>0$ such that $\nabla^2V(x)\geq \rho I_d$ for any $x\in\R^d$ (the Hessian matrix of $V$ is uniformly bounded from below as a symmetric matrix), then $\mu$ satisfies a $LSI$ with constant $2/\rho$.
\end{thm0}

We refer to \cite{BGL14} \S 5.7 for the general curvature-dimension criterion. We shall stick from know on to the assumption that $\mu(dx)\propto e^{-V(x)}dx$, for some smooth potential $V$. In particular, this bound is sharp for the standard Gaussian distribution $\gamma$, providing $c_{LSI}(\gamma)=2$ (whatever the dimension of the underlying space is). Unfortunately, this criterion fails as soon as $V$ is not uniformly convex. Yet, if this "lack of convexity" can be balanced by a bounded transformation, one may use perturbation techniques, such as the well-known Holley-Stroock method.

\begin{thm0}[Holley-Stroock, \cite{HS87}]
Assume that $d\mu\propto e^\Phi d\nu$ where $\nu$ is a probability measure that satisfies a $LSI$ and $\Phi$ is continuous and bounded. Then $\mu$ satisfies a $LSI$ with $c_{LSI}(\mu)\leq e^{2(\sup(\Phi)-\inf(\Phi))}c_{LSI}(\nu)$.
\end{thm0} 

Note that perturbation by unbounded functions (under for example growth assumptions) has been studied, see for example \cite{BLW07}. However, authors in the latter explain that their method weakens the inequality as soon as the perturbation is not bounded. Nevertheless, the $LSI$ can be preserved by unbounded perturbation in some specific cases, as will be developed in this article.

 Apart frome perturbation, stability of $LSI$ by tensorization is also a key property of such inequalities, since it exhibits dimension-free behaviours for product measures, but fails in general to provide efficient bounds beyond this case. In particular, one may wish to keep track of the geometry of $\mu$ (dimension of the space, log-concavity, curvature, etc.) through $c_{LSI}(\mu)$, which can be difficult in many settings (as will be discussed in Section 4). For further reading, we refer to \cite{BGL14} \S 5.7 for detailed results and to the remarkably synthetic monograph \cite{ABC+00} for a broader introduction. \medskip

In this note, we provide a probabilistic approach based on the study of some Feynman-Kac semigroups to derive new estimates on the logarithmic Sobolev constant. It follows the recent work of Bonnefont and Joulin involving intertwinings and functional inequalities of spectral flavour \cite{BJ14,BJ19} and extends their approach to the latter. A somewhat similar approach can also be found in the recent work of Sturm and his collaborators on metric measure spaces \cite{BHS19}. Let us give an overview of our main results. They will be properly stated and proved in Section~3.

We first show a representation theorem for Feynman-Kac semigroups acting on gradient fields. Namely, for a perturbation function $a$ satisfying some regularity and growth assumptions, the following result holds.

\begin{thm0}
There exist a stochastic process $(X_{t,a})_{t\geq0}$, a martingale $(R_{t,a})_{t\geq0}$ and a matrix-valued process $(J_t^{X_a})_{t\geq0}$ such that for any smooth function $f$, one has
\[\calP_t^{\nabla^2V}(\nabla f)=\esp[R_{t,a}J_t^{X_a}\nabla f(X_{t,a})],\]
where $(\calP_t^{\nabla^2V})_{t\geq0}$ is the Feynman-Kac semigroup of interest.
\end{thm0}

The invariant measure of the above process is known and closely related to $\mu$. The martingale is given by Girsanov's theorem, while the matrix-valued process can be seen as the Jacobian matrix of $(X_{t,a})_{t\geq0}$ (with respect to the initial condition).

Note that this formula can be related to other forms of derivatives of heat semigroups, including for example the well-known Bismut formula (as presented for instance in \cite{EL94}). Originally derived using Malliavin calculus (see  \cite{Bis84}), Elworthy and Li emphasize in \cite{EL94} a more geometric approach and our proof relates to the differentiation of the flow of some stochastic differential equation (as presented for example in \cite{Pro04} \S V.7).  Yet, the above expression as a Feynman-Kac semigroup acting on a gradient field is particularly suitable when one aims to infer a logarithmic Sobolev inequality. In particular, this probabilistic representation allows to obtain Grönwall-type estimates on the semigroup, that lead to a new criterion for $LSI$. Namely, for a perturbation function $a$ satisfying some growth assumptions, we can define a curvature $\kappa_a\in\R$ (depending on $a$ and $\nabla^2V$) which provides a Bakry-Émery-like condition.
\begin{thm0}
If $\kappa_a>0$, then $\mu$ satisfies a $LSI$ with $c_{LSI}(\mu)\leq C_a/\kappa_a$, for some $C_a>0$.
\end{thm0}
This result indeed encompasses the Bakry-Émery criterion (taking $a\equiv1$). Note that we derive, in the specific case of monotonic functions, a very similar result, yet allowing the function $a$ to be unbounded. 

The choice of $a$ (provided that technical assumptions are satisfied) in the latter theorem is rather free, so that one expects to take it such as $C_a/\kappa_a$ is minimal (to get the sharpest bound on $c_{LSI}(\mu)$). The precise value of $C_a$ and $\kappa_a$ and their behaviour with respect to $a$ are discussed in more details around two examples.

The first one is the quadric potential, that is $V(x)=|x|^4/4$; the second is the double-well: $V(x)=|x|^4/4-\beta|x|^2/2$ (for $\beta\in(0,1/2)$). Bakry-Émery criterion fails in both cases, yet our main theorem applies and we manage to infer the following behaviour of $c_{LSI}(\mu)$.

\begin{thm0}
\begin{itemize}
\item Assume that $V(x)=|x|^4/4$. Then $\mu$ satisfies a $LSI$ and $c_{LSI}(\mu)$ does not depends on the dimension.
\item Assume that $V(x)=|x|^4/4-\beta|x|^2/2$, $\beta\in(0,1/2)$. Then $\mu$ satisfies a $LSI$ and $c_{LSI}(\mu)$ only depends on $\beta$.
\end{itemize}
\end{thm0}

We briefly compare both results with the Holley-Stroock method, and provide explicit constants.\medskip

The article is organised as follows. We introduce in Section~2 the framework of the paper, along with some results about intertwinings and Feynman-Kac semigroups. In Section~3, we properly state and prove our main results and discuss a comparison with the Holley-Stroock approach. Finally, Section~4 is devoted to examples, where explicit constants and detailed computations are provided.

\section{Basic framework}

In this first section, we recall the framework of our analysis, basic results and definitions about intertwinings and Feynman-Kac semigroups (as introduced in \cite{BJ14,ABJ18}).

\subsection{Setting}

The whole analysis shall be performed on the $d$-dimensional Euclidean space $(\R^d,|\cdot|)$, for $d\in\N^\star$. We let $\mathcal C^\infty(\R^d,\R)$ and $\mathcal C^\infty(\R^d,\R^d)$ be respectively the set of infinitely differentiable functions and vector fields on $\R^d$, and let $\mathcal C^\infty_c(\R^d,\R)$ and $\mathcal C^\infty_+(\R^d,\R)$ denote respectively the set of compactly supported and positive $\mathcal C^\infty$ functions on $\R^d$. We endow those spaces with the supremum norm $\|\cdot\|_\infty$. We consider throughout this article a probability measure $\mu$ on $\R^d$ whose density with respect to the Lebesgue measure is proportional to $e^{-V}$, for some potential $V$ at least twice differentiable. To this measure, one can associate a Markov diffusion operator defined as
\[ \L=\Delta-\nabla V\cdot\nabla, \]
where $\Delta$ and $\nabla$ respectively stand for the usual Laplace operator and gradient on $\R^d$. The flow of the equation $\partial_tu=\L u$ over $\R_+$ defines a Markov semigroup $\PP$, invariant with respect to $\mu$, which is, under standard assumptions on $V$, ergodic in $L^2(\mu)$. Such assumptions include for example that $\L$ vanishes only for constant functions and the latter are stable by $\PP$. See \cite{BGL14} \S 3.1.9 for a general result. Moreover, this semigroup describes the dynamics of a diffusion process $\XX$ that solves the following Stochastic Differential Equation (SDE):
\begin{equation}\label{SDE}
dX_t^x=\sqrt2\,dB_t-\nabla V(X_t^x)dt,\quad X_0^x=x\in\R^d\ \mathrm{a.s.},\tag{$E$}
\end{equation}
where $(B_t)_{t\geq0}$ denotes the standard $d$-dimensional Brownian motion. All stochastic processes are defined on some probability space $(\Omega,\mathcal F,\P)$, and we let $(\mathcal F_t)_{t\geq0}$ denote the natural (completed) filtration associated to $(B_t)_{t\geq0}$. Under mild assumptions on $V$, this process is non-explosive and converges in distribution towards $\mu$, its invariant distribution. Moreover, regularity of $V$ ensures that $x\mapsto X_t^x$ is (at least) differentiable over $\R^d$, for any $t\geq0$. See Remarks~\ref{req_explo} and \ref{req_reg} at the end of this section for more informations and references about non-explosion and regularity w.r.t. the initial condition.

In addition, $\L$ is symmetric on $\mathcal C^\infty_c(\R^d,\R)$ with respect to $\mu$, and the integration by parts formula rewrites as follows: for any $f,g\in\mathcal C^\infty_c(\R^d,\R)$,
\[\int_{\R^d}f\L g\,d\mu=\int_{\R^d}g\L f\,d\mu=-\int_{\R^d}\nabla f\cdot\nabla g\,d\mu.\]
In particular, $\L$ is non-positive on $\mathcal C^\infty_c(\R^d,\R)$. Hence by completeness, this operator admits a unique self-adjoint extension (which shall still be denoted $\L$) on some domain $\mathcal D(\L)\subset L^2(\mu)$ for which  $\mathcal C^\infty_c(\R^d,\R)$ is a core, \textit{i.e.} is dense for the norm induced by $\L$ (see \cite{BGL14} \S 3.1.8 and thereafter for more precise informations).\smallskip

Finally, let us recall the definition of the logarithmic Sobolev inequality we will refer to.

\begin{defin}
The measure $\mu$ is said to satisfy a \emph{Logarithmic Sobolev Inequality} (in short $LSI$) with constant $c>0$ if for any $f\in\mathcal C_c^\infty(\R^d,\R)$ one has
\[\ent_\mu(f^2)\leq c\int_{\R^d}|\nabla f|^2d\mu.\]
We let $c_{LSI}(\mu)$ denote the optimal constant in the latter inequality, which we may as well refer as the \emph{logarithmic Sobolev constant}.
\end{defin}

\begin{req}[Embeddings and integrability]
The integration by parts formula entails that the bilinear form $(f,g)\mapsto-\int_{\R^d} f\L g\,d\mu$ extends likewise on some domain in which $\mathcal D(\L)$ is dense for the $H^1(\mu)$ norm (see \cite{BGL14} \S 3.3.2 for more detailed statements). Hence, in some way, the $LSI$ may be seen as a continuous embedding of $H^1(\mu)$ into some Orlicz space (see \cite{RZ06}), in quite a similar way as Sobolev inequalities provide a continuous (and compact, with Rellich-Kondrachov theorem) embedding of $H^1(\mu)$ into some $L^p(\mu)$ spaces (see for example \cite{BGL14} \S 6.4). Similarly, through Herbst's argument, $LSI$ implies that the square of 1-Lipschitz functions is exponentially integrable (see \cite{BGL14} \S 5.4), and thus Gaussian concentration for $\mu$, whereas a Sobolev inequality implies that such functions are actually bounded in $H^1(\mu)$ (see \cite{BGL14} \S 6.6). One may as well compare both Sobolev and logarithmic Sobolev inequalities to the (weaker) Poincaré inequality, we refer the interested reader to \cite{BGL14} \S 4.4 for further information.

\end{req} 

We end this setting section with some details and references about diffusion processes.

\begin{req}[Diffusion processes: non-explosion]\label{req_explo}
The explosion time of the process $(X_t)_{t\geq0}$ is defined as $\tau_e=\inf\{t\geq0:\limsup_{s\to t}|X_s|=+\infty\}$ (the definition is quite similar to the classical ODE one, except that $\tau_e$ is here a stopping time w.r.t. $(\mathcal F_t)_{t\geq0}$). The process is said to be non-explosive (in finite time) as soon as $\tau_e$ is almost surely infinite. This is actually equivalent to the \emph{mass preservation} for $\PP$, that is, $\P_t\mathbf 1=\mathbf 1$ for any $t\geq0$, with $\mathbf 1$ the constant function equal to 1 (understood as the increasing limit of a sequence of compactly supported $\mathcal C^\infty$ functions). Indeed, for any $t\geq 0$, $\P_t\mathbf 1=\P(t\leq \tau_e)$, so that $\P_t\mathbf 1=\mathbf 1$ for any $t\geq0$ if and only if $\tau_e=+\infty$ almost surely. This property is somewhat easier to handle, and Bakry inferred in \cite{Bak86} the following criterion: $(X_t)_{t\geq0}$ is non-explosive as soon as there is $\rho\in\R$ such that $\nabla^2V(x)\geq \rho I_d$ for any $x\in\R^d$. Note that $\rho$ is not required to be positive, making it a very general condition. Roughly speaking, it states that $V$ should not be "too concave".

From now on, we will assume that the latter is satisfied. This is for example true for $V(x)\propto|x|^\alpha$ for $\alpha >1$, as shall be made clear in Section 4.
\end{req}
\begin{req}[Diffusion processes: initial condition]\label{req_reg}
As quickly mentioned above, as long as the process $(X_t)_{t\geq0}$ is non-explosive and $\nabla V$ is smooth enough, the function $x\mapsto X_t^x(\omega)$ (for any fixed $t\geq0$ and almost any fixed $\omega\in\Omega$) is differentiable on $\R^d$. Actually, as mentioned in \cite{BGL14} \S B.4, this application is as smooth as $\nabla V$ is (up to the explosion time). In the following, we may only focus on the first order derivative (also know as tangent process or tangent flow), but general results for any order of differentiation can be found in \cite{Pro04} \S V.7 (Theorems 39 and 40).
\end{req}

\subsection{Intertwinings} We now focus on intertwinings (for a comprehensive introduction, see \cite{BJ14,ABJ18}). Basically, we are interested in commutation relations between gradients and Markov generators, which give rise to the so-called Feynman-Kac semigroups. In the following proposition, we introduce some notation related to tensor operators and recall a chain rule commutation formula.

\begin{pdefin}\label{pdefin_simple_inter}
In the following, we let $\calL$ denote the tensorized operator $\L^{\otimes d}$ and $(\calP_t)_{t\geq0}$ be the associated Markov semigroup, that both act on vector fields. For $F=(F_1,\dots,F_d)\in\mathcal C^\infty(\R^d,\R^d)$, they write as
\[\calL F=(\L F_1,\dots,\L F_d)\ \text{ and }\ \calP_tF=(\P_t F_1,\dots,\P_t F_d).\]
For $f\in\mathcal C^\infty(\R^d,\R)$, we recall the intertwining relation:
\[\nabla\L f=(\calL-{\nabla^2V})(\nabla f),\]
where $\nabla^2V\cdot\nabla f$ is the standard matrix-vector product. Similarly, the  Feynman-Kac semigroup $(\calP_t^{\nabla^2V})_{t\geq0}$ associated to $\calL-{\nabla^2V}$ satisfies the following identity:
\[\nabla\P_tf=\calP_t^{\nabla^2V}(\nabla f),\quad t\geq0,\]
provided that $f$ has compact support.
\end{pdefin}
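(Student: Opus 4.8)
The plan is to prove the two assertions in order, since the chain-rule identity $\nabla\L f=(\calL-\nabla^2V)(\nabla f)$ is purely algebraic, while the semigroup identity $\nabla\P_tf=\calP_t^{\nabla^2V}(\nabla f)$ is its integrated version and will be obtained by identifying two solutions of one and the same linear parabolic Cauchy problem.

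For the first identity I would simply differentiate. Writing $\L f=\Delta f-\nabla V\cdot\nabla f$ and taking the $i$-th partial derivative, Schwarz's theorem (symmetry of mixed partials) gives $\partial_i(\Delta f)=\Delta(\partial_if)$, while the product rule gives
\[\partial_i\big(\nabla V\cdot\nabla f\big)=\sum_{j=1}^d\partial^2_{ij}V\,\partial_jf+\sum_{j=1}^d\partial_jV\,\partial^2_{ij}f=\big(\nabla^2V\,\nabla f\big)_i+\big(\nabla V\cdot\nabla\big)(\partial_if).\]
Collecting the $d$ components yields $\nabla\L f=\calL(\nabla f)-\nabla^2V\,\nabla f$, which is the claimed relation; only $f\in\mathcal C^\infty(\R^d,\R)$ and $V\in\mathcal C^2$ are used here.

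For the second identity I would fix $f\in\mathcal C^\infty_c(\R^d,\R)$. Since $V$ is smooth and $\XX$ is non-explosive, $\P_tf\in\mathcal C^\infty(\R^d,\R)$ for every $t\geq0$ with $\partial_t\P_tf=\L\P_tf=\P_t\L f$, and the compact support of $f$ together with the lower bound $\nabla^2V\geq\rho I_d$ standing in force from Remark~\ref{req_explo} (and the regularity of the flow, Remark~\ref{req_reg}) gives enough control on $\P_tf$ and its derivatives to exchange $\partial_t$ with $\nabla$. Applying the first identity to the function $\P_tf$ then shows that $w_t:=\nabla\P_tf$ solves $\partial_tw_t=(\calL-\nabla^2V)w_t$ with $w_0=\nabla f$, i.e. the same evolution equation, with the same initial datum, as $t\mapsto\calP_t^{\nabla^2V}(\nabla f)$. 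To conclude I would invoke uniqueness for this Cauchy problem, and the route closest to the spirit of the paper is the probabilistic one: introduce the matrix-valued multiplicative functional $(M_t)_{t\geq0}$ solving $dM_t=-M_t\,\nabla^2V(X_t^x)\,dt$, $M_0=I_d$, for which $\calP_t^{\nabla^2V}G(x)=\esp[M_t\,G(X_t^x)]$ is the Feynman--Kac representation of the semigroup generated by $\calL-\nabla^2V$; then apply Itô's formula to $s\mapsto M_s\,\nabla\P_{t-s}f(X_s^x)$ on $[0,t]$, noting that the finite-variation part cancels exactly because of the algebraic intertwining (and that $(M_s)$ has finite variation, so there is no covariation term), whence the process is a local martingale, which the bound $\|M_s\|\leq e^{-\rho s}$ (from $\nabla^2V\geq\rho I_d$) and the regularity and gradient bound of $\P_rf$ turn into a true martingale; equating its values at $s=0$ and $s=t$ gives $\nabla\P_tf(x)=\esp[M_t\,\nabla f(X_t^x)]=\calP_t^{\nabla^2V}(\nabla f)(x)$. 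A purely analytic alternative is to write both sides via Duhamel's formula $u_t=\calP_t(\nabla f)-\int_0^t\calP_{t-s}\big(\nabla^2V\,u_s\big)\,ds$ and close a Grönwall estimate on the difference, but this must be run in a weighted sup-norm once $\nabla^2V$ is unbounded, as in the examples of Section~4.

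The hard part is not the algebra of the first step but the functional-analytic bookkeeping behind the second: that $\P_tf$ is smooth with derivatives of controlled growth, that $\partial_t$ and $\nabla$ commute, that $\calP_t^{\nabla^2V}$ is well defined on the relevant class of vector fields, and --- on the probabilistic side --- that the natural candidate is a genuine rather than merely local martingale. All of this rests on non-explosion of $\XX$ and on the uniform lower bound $\nabla^2V\geq\rho I_d$ assumed from Remark~\ref{req_explo} onward, together with the compact support hypothesis on $f$; granted these, the identity follows by uniqueness.
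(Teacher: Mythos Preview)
The paper does not actually prove this proposition-definition: it is stated as a recalled fact, with the reader pointed to \cite{BJ14,ABJ18} for background, so there is no argument in the paper to compare your proposal against directly. Your proof is correct and self-contained; the algebraic commutation is a straightforward computation, and your martingale route to the semigroup identity is sound once the regularity caveats you flag are granted.

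It is worth noting that the probabilistic representation you set up---the multiplicative functional $(M_t)_{t\geq0}$ solving $dM_t=-M_t\,\nabla^2V(X_t^x)\,dt$ and the formula $\calP_t^{\nabla^2V}G(x)=\esp[M_t\,G(X_t^x)]$---is precisely what the paper introduces later, in the proof of Theorem~\ref{cor_WFK}, under the name of the tangent process $J_t^X$. The difference is one of viewpoint: the paper obtains $\nabla\P_tf=\esp[J_t^X\nabla f(X_t)]$ by differentiating the flow $x\mapsto X_t^x$ and interchanging $\nabla$ with the expectation (Remark~\ref{req_reg}), then reads off the ODE for $J_t^X$ by differentiating~\eqref{SDE}; you instead define $M_t$ by that same ODE, appeal to Feynman--Kac to identify $\esp[M_t\,\cdot\,]$ with $\calP_t^{\nabla^2V}$, and close via It\^o and uniqueness. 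Both routes produce the same object and the same identity; the paper's tangent-flow derivation is marginally more direct (one interchange of limits instead of a martingale argument plus a uniqueness claim), while your approach makes the PDE structure and the role of the lower bound $\nabla^2V\geq\rho I_d$ more explicit.
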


This idea takes roots in various works in differential geometry and operators analysis, and relates (in some more general setting) to the Bochner-Lichnerowicz-Weitzenb\"ock formula (see \cite{Bou90} for an enthusiast introduction). See also the works around Witten Laplacians arising in statistical mecanics, for which we refer to Helffer's monograph \cite{Hel02}.

\begin{req}
We can still define the Feynman-Kac semigroup associated to $\calL$ and a general smooth map $M:\R^d\to\mathcal M_d(\R)$ as the flow of the following PDE system:
\[\left\{\begin{array}{rcl}
	\partial_tu & = & (\calL-M)u\\
	u(0,\cdot)&=&u_0,
	\end{array}
	\right. \]
denoted by $(\calP_t^M)_{t\geq0}$, provided that solutions to this system exist at any time. Such an extension will be implicitly used later.
\end{req}

\begin{req}
The original Feynman-Kac formula, that arises in quantum mechanics, is stated for scalar-valued functions $f$ and $m$ and writes as follows (see \cite{BGL14} \S 1.15.6):
\[\calP_t^{m}f=\esp\left[f(X_t)e^{-\int_0^tm(X_s)ds}\right].\]
We call $(\calP_t^{\nabla^2V})_{t\geq0}$ a Feynman-Kac semigroup by analogy with this case (which shall clearly appear in the following), yet the representation of $(\calP_t^{\nabla^2V})_{t\geq0}$ does not write as simply as the above. This is the object of the next section.
\end{req}

\section{Main results}

In this section, we state and prove our main results in two steps: we first provide a representation theorem, related to Feynman-Kac semigroups, then apply it to estimates on the logarithmic Sobolev constant.

In \cite{Wang14}, Wang developed a somehow similar approach in the framework of manifolds with boundaries, based on the Girsanov's theorem for reflected processes. Yet in our case, we take advantages of some properties of the semigroup, namely invariance and ergodicity.

\subsection{Representation of Feynman--Kac semigroups}

This first part is devoted to the main representation theorem we shall make use of. It is presented for Feynman-Kac semigroups acting on gradients, but still holds for more general vector fields (in which case the proof relies on a classical martingale argument). 

The perturbation technique that will be set up in the next section strongly relies on a Girsanov representation of the semigroup $\PP$. To this end, we introduce a smooth perturbation function in $V$ and study the relation between $(X_t)_{t\geq0}$ and the process obtained from this new potential.

\begin{defin}\label{def_weight_proc}
	Let $a\in\mathcal C_+^\infty(\R^d,\R)$. We let $(X_{t,a})_{t\geq0}$ denote the solution of the SDE
	\[dX_{t,a}=\sqrt 2dB_t-\nabla V_a(X_{t,a})\,dt,\]
	where $V_a=V+\log(a^2)$.
\end{defin}
Straightforward computations show that the generator of this process writes down 
\[\L_a=\L-2\frac{\nabla a}{a}\cdot\nabla,\]
and we let $(\P_{t,a})_{t\geq0}$ denote the associated Markov semigroup (in particular, for any $f\in\mathcal C_c^\infty(\R^d,\R)$, $\P_{t,a}f=\esp[f(X_{t,a})]$). Moreover, if $\mu_a$ is defined such that $d\mu_a/d\mu=1/a^2$, then $(\P_{t,a})_{t\geq0}$ is $\mu_a$-invariant and $\L_a$ is (essentially) self-adjoint in $L^2(\mu_a)$ (see as well \cite{BGL14} \S 3.1.8). Note that $\mu_a$ is not a probability (or even finite) measure a priori.

Provided that everything is well-defined, the intertwining relation of Proposition-definition \ref{pdefin_simple_inter} for semigroups is also available for $\P_{t,a}f$, and writes as follows:
\[\nabla\P_{t,a}f=\calP_{t,a}^{\nabla^2V_a}(\nabla f).\]

Before we state the main theorem of this section, let us define a condition on the perturbation function that naturally arises in the computations involving Girsanov's theorem.

\begin{defin}
A function $a\in\mathcal C_+^\infty(\R^d,\R)$ is said to satisfy the \emph{(G) condition} whenever $|\nabla a|/a$ is bounded.
\end{defin}

We can now turn to the representation result (the first theorem stated in the introduction). To fix the ideas, we may write down the initial condition in the following statements, and omit it in the proofs.

\begin{thm}\label{cor_WFK}
	Let $f\in\mathcal C_c^\infty(\R^d,\R)$ and $a\in\mathcal C^\infty_+(\R^d,\R)$ satisfying \textit{(G)}. Then for any $t\geq0$, $x\in\R^d$,
	\[\calP_t^{\nabla^2V}(\nabla f)(x)=\esp[R_{t,a}^xJ_t^{X^x_a}\nabla f(X^x_{t,a})], \]
	where $(R_{t,a}^x)_{t\geq0}$ is a martingale with respect to $(\mathcal F_t)_{t\geq0}$ defined as
	\[R_{t,a}^x=\frac{a(X^x_{t,a})}{a(x)}\exp\left(-\int_0^t\frac{\L_a a(x)}{a(x)}(X^x_{s,a})\;ds\right),\quad t\geq0,\]
	and $(J_t^{X^x_a})_{t\geq0}$ is a matrix-valued process that solves
	\[\left\{
	\begin{array}{rl}
	dJ_t^{X^x_a}&=-J_t^{X^x_a}\nabla^2V(X^x_{t,a})dt,\quad t>0\\
	J_0^{X^x_a}&=I_d.
	\end{array}\right.\]
\end{thm}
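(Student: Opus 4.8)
\emph{Proof strategy.} The plan is to combine two classical ingredients: differentiation of the stochastic flow of \eqref{SDE}, which gives a representation of $\calP_t^{\nabla^2V}(\nabla f)$ along the \emph{unperturbed} diffusion $X^x$, and a Girsanov change of measure exchanging $X^x$ for the perturbed process $X^x_{t,a}$ at the price of the density $R_{t,a}^x$. Crucially, one differentiates \emph{first} and applies Girsanov \emph{afterwards}, so that the only flow one ever differentiates is the simple one of the unperturbed SDE.

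I would first settle the case $a\equiv1$. By Remark~\ref{req_reg} the map $x\mapsto X^x_t$ is differentiable, and differentiating \eqref{SDE} in the initial condition shows that the Jacobian $Y^x_t=\nabla_xX^x_t$ solves the linear equation $dY^x_t=-\nabla^2V(X^x_t)Y^x_t\,dt$, $Y^x_0=I_d$; since $\nabla^2V$ is symmetric, its transpose $J_t^{X^x}:=(Y^x_t)^{\mathsf T}$ solves precisely $dJ_t^{X^x}=-J_t^{X^x}\nabla^2V(X^x_t)\,dt$, $J_0^{X^x}=I_d$, and the chain rule gives $\nabla_x\big(f(X^x_t)\big)=J_t^{X^x}\nabla f(X^x_t)$. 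The standing lower bound $\nabla^2V\geq\rho I_d$ yields $\tfrac{d}{dt}|Y^x_tv|^2=-2\langle Y^x_tv,\nabla^2V(X^x_t)Y^x_tv\rangle\leq-2\rho|Y^x_tv|^2$, hence the a priori estimate $\|J_t^{X^x}\|=\|Y^x_t\|\leq e^{-\rho t}$, uniform in $x$. Consequently the difference quotients of $x\mapsto f(X^x_t)$ are dominated by the deterministic constant $e^{-\rho t}\|\nabla f\|_\infty$ (recall $f\in\mathcal C^\infty_c$), which legitimises differentiation under the expectation and gives $\nabla\P_tf(x)=\esp\big[J_t^{X^x}\nabla f(X^x_t)\big]$. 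By the intertwining relation of Proposition-definition~\ref{pdefin_simple_inter}, the left-hand side equals $\calP_t^{\nabla^2V}(\nabla f)(x)$, which is the desired identity when $a\equiv1$.

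Next I would transfer this to $X^x_{t,a}$ via Girsanov's theorem. Both processes are driven by the same Brownian motion and, since $\nabla V_a=\nabla V+2\nabla a/a$, one has $dX^x_{t,a}=\sqrt2\,dB_t-\nabla V(X^x_{t,a})\,dt-2\tfrac{\nabla a}{a}(X^x_{t,a})\,dt$. Set $D_t=\exp\!\big(\sqrt2\int_0^t\tfrac{\nabla a}{a}(X^x_{s,a})\cdot dB_s-\int_0^t\big|\tfrac{\nabla a}{a}(X^x_{s,a})\big|^2\,ds\big)$, the exponential martingale corresponding to the drift change; under the \textit{(G)} condition $|\nabla a|/a$ is bounded, so the quadratic variation of the stochastic integral is bounded and Novikov's criterion makes $(D_t)_{t\geq0}$ a genuine $\P$-martingale. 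Setting $d\tilde\P=D_t\,d\P$ on $\mathcal F_t$, Girsanov's theorem turns $\tilde B_t=B_t-\sqrt2\int_0^t\tfrac{\nabla a}{a}(X^x_{s,a})\,ds$ into a $\tilde\P$-Brownian motion, in terms of which $dX^x_{t,a}=\sqrt2\,d\tilde B_t-\nabla V(X^x_{t,a})\,dt$; by uniqueness in law for \eqref{SDE}, the law of $(X^x_{s,a})_{0\leq s\leq t}$ under $\tilde\P$ coincides with that of $(X^x_s)_{0\leq s\leq t}$ under $\P$. Since $J_t^{X^x}\nabla f(X^x_t)$ is a measurable functional of the path $(X^x_s)_{0\leq s\leq t}$ — the matrix being obtained by solving $dJ_s=-J_s\nabla^2V(X^x_s)\,ds$ along it — this identity of laws rewrites $\esp_\P[J_t^{X^x}\nabla f(X^x_t)]$ as $\esp_{\tilde\P}[J_t^{X^x_a}\nabla f(X^x_{t,a})]=\esp_\P[D_t\,J_t^{X^x_a}\nabla f(X^x_{t,a})]$, where $J_t^{X^x_a}$ now solves $dJ_t^{X^x_a}=-J_t^{X^x_a}\nabla^2V(X^x_{t,a})\,dt$, $J_0^{X^x_a}=I_d$, i.e.\ the matrix process of the statement. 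Finally, applying Itô's formula to $\log a(X^x_{t,a})$ (with $d\langle X^x_a\rangle_t=2I_d\,dt$) and using the definitions of $V_a$ and $\L_a$, the drift of $\log a(X^x_{t,a})$ is readily found to be $\big(\L_aa/a-|\nabla a|^2/a^2\big)(X^x_{t,a})$; exponentiating then identifies $D_t$ with $R_{t,a}^x$, which is therefore a $\P$-martingale, and the proof is complete.

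The two points requiring genuine care — and where the hypotheses enter — are the domination allowing to differentiate under the expectation (supplied by the uniform bound $\|J_t^{X^x}\|\leq e^{-\rho t}$, itself a consequence of the lower Hessian bound together with non-explosion, cf.\ Remarks~\ref{req_explo}--\ref{req_reg}), and the fact that $(D_t)_{t\geq0}$ is a true and not merely local martingale, which is exactly where \textit{(G)} is used, through Novikov's criterion; the latter also ensures, via $\tilde\P\sim\P$ on each $\mathcal F_t$, that $X^x_{t,a}$ is non-explosive. Everything else is routine bookkeeping, modulo the standard variational-equation theory for flows of stochastic differential equations (e.g.\ \cite{Pro04} \S V.7).
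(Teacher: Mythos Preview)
Your proof is correct and follows essentially the same route as the paper: differentiate the flow of the unperturbed SDE to obtain $\nabla\P_tf(x)=\esp[J_t^{X^x}\nabla f(X^x_t)]$, then apply Girsanov (with the \textit{(G)} condition ensuring the density is a true martingale) to transfer this expectation to the perturbed process, and identify the Girsanov density with $R_{t,a}^x$ via It\^o's formula on $\log a(X^x_{t,a})$. You are in fact slightly more careful than the paper on two points it glosses over --- the domination justifying differentiation under the expectation (via $\|J_t^{X^x}\|\leq e^{-\rho t}$) and the explicit identification $J_t^{X^x}=(Y^x_t)^{\mathsf T}$ --- but the architecture of the argument is the same.
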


As mentioned before, this result is based on Girsanov's theorem (see \cite{Pro04} \S III.8~Theorem~46 for a proper statement). Hence, before we turn to its proof, we need the following lemma, that establishes a relation between the Markov semigroups $\PP$ and $(\P_{t,a})_{t\geq0}$.

\begin{lem}\label{lem_Girs}
	Let  $a\in\mathcal C_+^\infty(\R^d,\R)$ satisfying the $(G)$ condition. Then for any function $f\in\mathcal C_c^\infty(\R^d,\R)$, any $t\geq0$, $x\in\R^d$:
	\[\P_tf(x)=\esp[f(X^x_t)]=\esp\left[R^x_{t,a}f(X^x_{t,a})\right],\]
	where $(R_{t,a}^x)_{t\geq0}$ is the $(\mathcal F_t)_{t\geq0}$-martingale defined above.
\end{lem}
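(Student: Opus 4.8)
The plan is to derive the identity as a Girsanov change of measure relating the law of $(X^x_{t,a})_{t\ge0}$ to that of $(X^x_t)_{t\ge0}$, and then to recognise the resulting Radon--Nikodym density as the process $(R^x_{t,a})_{t\ge0}$ by applying Itô's formula to $s\mapsto\log a(X^x_{s,a})$. The starting observation is that, since $V_a=V+\log(a^2)$, the drifts of the two SDEs differ by $\nabla V_a-\nabla V=2\,\nabla a/a$, which is bounded \emph{precisely} under the $(G)$ condition; this is exactly what makes the Girsanov exponential integrable.

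Concretely, I would first rewrite the equation defining $X^x_{t,a}$ as
\[
dX^x_{t,a}=\sqrt2\Big(dB_t-\sqrt2\,\tfrac{\nabla a}{a}(X^x_{t,a})\,dt\Big)-\nabla V(X^x_{t,a})\,dt,
\]
so that, by Girsanov's theorem (\cite{Pro04} \S III.8 Theorem~46), under the probability $\Q$ with $d\Q/d\P\big|_{\mathcal F_t}=D_t$, where
\[
D_t=\exp\!\left(\sqrt2\int_0^t\tfrac{\nabla a}{a}(X^x_{s,a})\cdot dB_s-\int_0^t\tfrac{|\nabla a|^2}{a^2}(X^x_{s,a})\,ds\right),
\]
the process $\widehat B_t=B_t-\sqrt2\int_0^t\tfrac{\nabla a}{a}(X^x_{s,a})\,ds$ is a Brownian motion and $(X^x_{t,a})_{t\ge0}$ solves \eqref{SDE} under $\Q$ with $X^x_{0,a}=x$. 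Weak uniqueness for \eqref{SDE} then yields $\esp_\Q[f(X^x_{t,a})]=\esp[f(X^x_t)]=\P_tf(x)$, that is, $\P_tf(x)=\esp[D_t\,f(X^x_{t,a})]$. It then remains to check that $D_t=R^x_{t,a}$: applying Itô's formula to $s\mapsto\log a(X^x_{s,a})$, using $d\langle X^{x,i}_a,X^{x,j}_a\rangle_s=2\delta_{ij}\,ds$ together with $\L_a=\Delta-\nabla V_a\cdot\nabla$, one obtains
\[
\log\frac{a(X^x_{t,a})}{a(x)}=\sqrt2\int_0^t\tfrac{\nabla a}{a}(X^x_{s,a})\cdot dB_s+\int_0^t\Big(\tfrac{\L_a a}{a}-\tfrac{|\nabla a|^2}{a^2}\Big)(X^x_{s,a})\,ds,
\]
and exponentiating and multiplying by $\exp\!\big(-\int_0^t\tfrac{\L_a a}{a}(X^x_{s,a})\,ds\big)$ produces exactly $D_t$. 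Hence $D_t=R^x_{t,a}$, which, being a Girsanov density, is in particular an $(\mathcal F_t)_{t\ge0}$-martingale.

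The one point requiring genuine care is the justification that $D$ is a \emph{true} martingale, so that $\Q$ is a bona fide probability measure and Girsanov's theorem actually applies, rather than merely a local martingale; this follows from Novikov's criterion, since the $(G)$ condition makes $|\nabla a|^2/a^2$ bounded and the relevant exponential moment is then controlled by a deterministic constant on each finite interval. To be fully rigorous about a possible explosion of $X_{t,a}$, I would first carry out the argument with $B$ and $X_{t,a}$ replaced by their versions stopped at the exit times $\tau_n$ of Euclidean balls of radius $n$, and then let $n\to\infty$; alternatively, and perhaps more cleanly, one can run the change of measure in the opposite direction, starting from the non-explosive process $X_t$ and pushing its law forward to that of $X_{t,a}$, which removes the need for any a priori control on the explosion time of $X_{t,a}$. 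Everything else is the routine bookkeeping of the $\sqrt2$ factors in Itô's and Girsanov's formulas.
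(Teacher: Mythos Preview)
Your proof is correct and follows essentially the same route as the paper's: both apply It\^o's formula to $\log a(X_{t,a})$ to identify $R_{t,a}$ with the Girsanov exponential $\exp\!\big(\sqrt2\int_0^t\tfrac{\nabla a}{a}(X_{s,a})\cdot dB_s-\int_0^t\tfrac{|\nabla a|^2}{a^2}(X_{s,a})\,ds\big)$, invoke the $(G)$ condition for the true-martingale property, and then use Girsanov's theorem to conclude that the law of $X_{t,a}$ under the tilted measure coincides with that of $X_t$ under~$\P$. The only difference is the order of presentation (you first define $D_t$ and then check $D_t=R_{t,a}$, whereas the paper first rewrites $R_{t,a}$ and then changes measure), together with your additional remarks on Novikov's criterion and on localising in case $X_{t,a}$ might explode; these are welcome points of rigour but do not constitute a different argument.
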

\begin{proof}
	We first set up a suitable exponential martingale before we identify the involved probability distributions with Girsanov's theorem. For the sake of legibility, the initial condition shall be omitted in the following.\medskip
	
	We first apply It\={o}'s formula to $\log(a(X_{t,a}))$:
	\[\log a(X_{t,a})=\log a+\sqrt2\int_0^t\nabla(\log a(X_{s,a}))\cdot dB_s+\int_0^t\L_a(\log a)(X_{s,a})\;ds.\]
	Note that this decomposition is fairly general and is related to the martingale problem of Stroock and Varadhan, see \cite{SV06} Chap. 6.
	
	Expanding the right-hand side and taking exponential lead to the following expression for $R_{t,a}$: 
	\[R_{t,a}=\exp\left(\sqrt2\int_0^t\frac{\nabla a}{a}(X_{s,a})\cdot dB_s-\int_0^t\left|\frac{\nabla a}{a}\right|^2(X_{s,a})\;ds\right).\]
	The $(G)$ condition ensures through standard arguments that the right-hand side is a true $(\mathcal F_t)_{t\geq0}$-martingale, thus so is $(R_{t,a})_{t\geq0}$. From now on, we set $Y_{t,a}=\sqrt2\frac{\nabla a}{a}(X_{t,a})$.\medskip
	
	We let $\Q_a$ be the probability measure defined as
	\[\left.\frac{d\Q_a}{d\P}\right|_{\mathcal F_t}=R_{t,a}.\]
	According to Girsanov's theorem, the process $(\tilde{B}_t)_{t\geq 0}$ defined as
	\[\tilde B_t=B_t-\int_0^tY_{s,a}\;ds,\]
	is a $\Q_a$-Brownian motion. Furthermore, the process $(X_{t,a})_{t\geq0}$ solves the SDE
	\[dX_{t,a}=\sqrt2d\tilde B_t-\nabla V(X_{t,a})\;dt,\]
	hence the law of $X_{t,a}$ under $\Q_a$ coincides with the one of $X_t$ under $\P$. In particular, for any $f\in\mathcal C_c^\infty(\R^d,\R)$,
	\[\P_tf=\esp[f(X_t)]=\esp[R_{t,a}f(X_{t,a})],\]
	and the proof is complete.
\end{proof}

We can now prove Theorem \ref{cor_WFK}.

\begin{proof}
	Recall that under the aforementioned non-explosion assumptions, the diffusion process defined by Equation \eqref{SDE} is differentiable with respect to its initial condition (see Remarks~\ref{req_explo} and \ref{req_reg}), so that for any $t\geq0$:
	\begin{align*}
	\calP_t^{\nabla^2V}(\nabla f)&=\nabla\P_tf\\
	&=\esp[\nabla(f(X_t))]\\
	&=\esp[J_t^X\nabla f(X_t)],
	\end{align*}
	where $(J_t^X)_{t\geq0}$ denotes the (matrix-valued) tangent process to $(X_t)_{t\geq0}$ (that is, the Jacobian matrix of $X_t$ with respect to the initial condition). Differentiating with respect to the initial condition in the SDE \eqref{SDE} provides the following formula for $J_t^X$:
	\[J_t^X=I_d-\int_0^tJ_s^X\nabla^2V(X_s)\;ds.\]
	One can replace $X_s$ by $X_{s,a}$ in the previous expression, to define as well
	\[J_t^{X_a}=I_d-\int_0^tJ_s^{X_a}\nabla^2V(X_{s,a})\;ds.\]
	Note that the potential $V$ is unchanged in the equation. Lemma \ref{lem_Girs} implies then, since $R_{t,a}$ is scalar-valued,
	\[\esp[J_t^X\nabla f(X_t)]=\esp[R_{t,a}J_t^{X_a}\nabla f(X_{t,a})],\]
	and the proof is complete.
\end{proof}
\begin{req}
Note that the statement of Lemma~\ref{lem_Girs} generalises to functionals of the trajectory $X_{[0,t]}:=(X_s,0\leq s\leq t)$. More precisely, for $t>0$ and $F:\mathcal C^0([0,t],\R)\to\R$ a continuous map, Girsanov's theorem yields as well:
\[\esp[F(X_{[0,t]})]=\esp[R_{t,a}F(X_{[0,t],a})].\]
This extension is implicitly used in the previous proof, taking $F(X_{[0,t]})=J_t^X\nabla f(X_t)$.
\end{req}

\begin{req}
In dimension $d=1$, since gradients and functions are both 1-dimensional objects, Theorem \ref{cor_WFK} rewrites in a more standard way:
\[(\P_tf)'=\P_t^{V''}(f')=\esp\left[R_{t,a}f'(X_{t,a})\exp\left(-\int_0^tV''(X_{s,a})\,ds\right)\right].\]
This writing shall be useful when dealing with monotonic functions in dimension 1, as briefly discussed at the end of the next section.
\end{req}

\subsection{Logarithmic Sobolev inequalities}

In this section, we provide a Feynman-Kac-based proof of the logarithmic Sobolev inequality, stated for a scalar perturbation. The method can easily be refined to improve the bound on $c_{LSI}(\mu)$, for example when finer spectral estimates on the generator are available or for a restricted set of test functions. For instance, we adapt the proof to derive estimates in restriction to monotonic (positive) functions.

\subsubsection{General case}

\textit{Notation.} The proof of the following theorem requires some matrix analysis. Henceforward, if $A$ is a symmetric matrix, we let $\rho_-(A)$ denote its smallest eigenvalue. We may also use $M^T$ and $u^T$ to denote the usual transpose of a matrix $M$ or a vector $u$.\smallskip

The following result states the Bakry-Émery-like criterion mentioned in the introduction.

\begin{thm}\label{thm_LSI}
	Let $a\in\mathcal C_+^\infty(\R^d,\R)$. Define 
	\[\kappa_a=\inf\limits_{x\in\R^d}\left\{2\rho_-(\nabla^2V(x))-a\L(a\Inv)(x)\right\}.\]
	If $a$, $a\Inv$ and $|\nabla a|$ are bounded and $\kappa_a>0$, then for any $f\in\mathcal C_c^\infty(\R^d,\R)$,
	\[\ent_\mu(f^2)\leq\frac{4\|a\|_\infty\|a^{-1}\|_\infty}{\kappa_a}\int_{\R^d}|\nabla f|^2\;d\mu.\]
\end{thm}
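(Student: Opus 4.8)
The plan is to run the classical semigroup (de Bruijn / Bakry--Émery) interpolation argument, but using the Feynman--Kac representation of Theorem~\ref{cor_WFK} as the substitute for a pointwise curvature bound. Fix $f\in\mathcal C_c^\infty(\R^d,\R)$; by density and an approximation argument we may assume $f$ is bounded with $f\geq\varepsilon>0$, so that $\log f$ and all quantities below are smooth and integrable. Write $\phi(t)=\P_t(f^2\log f^2)-(\P_tf^2)\log(\P_tf^2)$, or rather work with the entropy along the flow $H(t)=\int_{\R^d}\P_t(f^2)\log\P_t(f^2)\,d\mu$; since $\mu$ is invariant and ergodic, $\P_tf^2\to\int f^2\,d\mu$ in $L^2(\mu)$ as $t\to\infty$, so $H(\infty)=\bigl(\int f^2\,d\mu\bigr)\log\bigl(\int f^2\,d\mu\bigr)$ and $H(0)=\int f^2\log f^2\,d\mu$, whence $\ent_\mu(f^2)=H(0)-H(\infty)=-\int_0^\infty H'(t)\,dt$. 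A direct computation using $\partial_t\P_tf^2=\L\P_tf^2$ and the integration by parts formula gives $H'(t)=-\int_{\R^d}\frac{|\nabla\P_tf^2|^2}{\P_tf^2}\,d\mu$, i.e.\ minus four times the Fisher information of $\P_tf^2$. So the game reduces to controlling $\int |\nabla\P_tg|^2/\P_tg\,d\mu$ for $g=f^2$ in terms of $\int|\nabla g|^2/g\,d\mu=4\int|\nabla f|^2\,d\mu$.

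The key step is a pointwise (or rather $L^2(\mu)$-in-the-right-sense) decay estimate on $\nabla\P_tg$. Here I would use $\nabla\P_tg=\calP_t^{\nabla^2V}(\nabla g)=\esp[R_{t,a}J_t^{X_a}\nabla g(X_{t,a})]$ from Theorem~\ref{cor_WFK}. The matrix ODE $dJ_t^{X_a}=-J_t^{X_a}\nabla^2V(X_{t,a})\,dt$ gives $\frac{d}{dt}|J_t^{X_a}u|^2=-2\,(J_t^{X_a}u)^T\nabla^2V(X_{t,a})(J_t^{X_a}u)\leq -2\rho_-(\nabla^2V(X_{t,a}))|J_t^{X_a}u|^2$ for any vector $u$, and similarly the martingale weight contributes a factor whose logarithmic derivative is $-\frac{\L_a a}{a}(X_{t,a})$; combining, along each trajectory $|R_{t,a}J_t^{X_a}u|$ decays at rate at least $\tfrac12\inf_x\{2\rho_-(\nabla^2V(x)) - \ldots\}$ — the point is precisely that $R_{t,a}$ carries the extra $a(X_{t,a})/a(x)$ prefactor and the exponential of $-\int \L_a a/a$, and one checks $-\L_a a/a$ combines with the curvature term so that the effective exponential rate is governed by $\kappa_a$ after one pulls out the bounded prefactor $a(X_{t,a})/a(x)$, bounded by $\|a\|_\infty\|a^{-1}\|_\infty$. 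Concretely one should get a bound of the form $|\nabla\P_tg(x)|\leq \|a\|_\infty\|a^{-1}\|_\infty\, e^{-\kappa_a t/2}\,\esp_x[\,|\nabla g(X_{t,a})|\,]$ with the expectation taken under the $\P_{t,a}$-dynamics (the prefactor $R_{t,a}$ minus its bounded part being an honest probability density after renormalisation, via Lemma~\ref{lem_Girs} applied with the weight $a^{-1}$; this is where the boundedness of $a,a^{-1},|\nabla a|$ is used to keep everything a genuine martingale and to justify interchanging $\nabla$ and $\esp$).

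Next I would upgrade this gradient bound to a Fisher-information bound. Write $|\nabla\P_tg(x)|^2\leq \|a\|_\infty^2\|a^{-1}\|_\infty^2 e^{-\kappa_a t}\,\bigl(\esp_x[|\nabla g(X_{t,a})|]\bigr)^2$ and apply Cauchy--Schwarz in the form $\bigl(\esp_x[|\nabla g(X_{t,a})|]\bigr)^2 = \bigl(\esp_x[\tfrac{|\nabla g|}{\sqrt g}(X_{t,a})\cdot\sqrt{g}(X_{t,a})]\bigr)^2 \leq \esp_x[\tfrac{|\nabla g|^2}{g}(X_{t,a})]\cdot\esp_x[g(X_{t,a})]$. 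The first factor integrated against $\mu$ and using the correct invariance (after the renormalisation by $a^{-1}$, i.e.\ invariance of $\mu_a$, one juggles the $a^2$ weights so the net effect is again just the constant $\|a\|_\infty\|a^{-1}\|_\infty$) gives back $\int\frac{|\nabla g|^2}{g}\,d\mu$; the second factor is $\P_tg$ by definition. Dividing by $\P_tg$ and integrating: $\int_{\R^d}\frac{|\nabla\P_tg|^2}{\P_tg}\,d\mu\leq C\,e^{-\kappa_a t}\int_{\R^d}\frac{|\nabla g|^2}{g}\,d\mu$ with $C$ a bounded multiple of $\|a\|_\infty\|a^{-1}\|_\infty$. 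Finally $\ent_\mu(f^2)=\int_0^\infty\int\frac{|\nabla\P_tf^2|^2}{\P_tf^2}\,d\mu\,dt\leq C\int_0^\infty e^{-\kappa_a t}\,dt\cdot 4\int|\nabla f|^2\,d\mu = \frac{4C}{\kappa_a}\int|\nabla f|^2\,d\mu$, and tracking the constant should yield exactly $C=\|a\|_\infty\|a^{-1}\|_\infty$, matching the stated $\frac{4\|a\|_\infty\|a^{-1}\|_\infty}{\kappa_a}$.

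The main obstacle I anticipate is bookkeeping the $a$-weights correctly: the Feynman--Kac representation naturally lives under the $\P_{t,a}$-dynamics with invariant (infinite) measure $\mu_a=a^{-2}\mu$, while the entropy and Fisher information we want are with respect to $\mu$ itself, so one must carefully insert and remove factors of $a^{\pm1}$ and $R_{t,a}$ at the right places and verify that each such step is a legitimate application of Lemma~\ref{lem_Girs} (or its trajectorial extension) rather than losing a power somewhere. A secondary technical point is the passage from the semigroup identity for $f\in\mathcal C_c^\infty$ to general $g=f^2$ bounded away from $0$ and $\infty$, ensuring the differentiation-under-the-expectation and the $t\to\infty$ limit in the entropy are justified (dominated convergence / $L^2$-ergodicity plus the exponential gradient decay handle this). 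Everything else is the standard interpolation computation.
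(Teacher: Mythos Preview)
Your outline has the right overall architecture (de Bruijn identity, Feynman--Kac representation, Cauchy--Schwarz, Gr\"onwall, invariance), but the central step is handled incorrectly and would not close. The issue is where and how Cauchy--Schwarz is applied. You first extract a pointwise bound $|\nabla\P_tg(x)|\leq C e^{-\kappa_a t/2}\esp_x[|\nabla g(X_{t,a})|]$ and then split $|\nabla g|=\tfrac{|\nabla g|}{\sqrt g}\cdot\sqrt g$ under the \emph{unweighted} $X_{t,a}$-expectation. But then the second factor is $\esp_x[g(X_{t,a})]=\P_{t,a}g(x)$, not $\P_tg(x)$ as you claim; there is no general inequality $\P_{t,a}g\leq\P_tg$ (that is exactly what Proposition~\ref{prop_monotone} provides, under monotonicity assumptions, for the proof of Theorem~\ref{thm_mono}). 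So the $\P_tg$ in the denominator does not cancel, and the argument stalls. A secondary problem is the rate: pulling out $a(X_{t,a})/a(x)$ and bounding $|J_t^{X_a}u|\exp(-\int_0^t\tfrac{\L_a a}{a})$ pathwise gives decay rate $\tilde\kappa_a=\inf(\rho_-(\nabla^2V)-a\L(a^{-1}))$, not $\kappa_a/2=\inf(\rho_-(\nabla^2V)-\tfrac12 a\L(a^{-1}))$; and the prefactor you accumulate is already $\|a\|_\infty^2\|a^{-1}\|_\infty^2$ before any measure change.

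The paper's proof avoids both problems by a single, sharper Cauchy--Schwarz applied \emph{before} any pointwise estimate: write $\calP_t^{\nabla^2V}(\nabla f)=2\esp\bigl[R_{t,a}^{1/2}J_t^{X_a}\nabla\sqrt f(X_{t,a})\cdot R_{t,a}^{1/2}\sqrt f(X_{t,a})\bigr]$ and split there. The second factor is then $\esp[R_{t,a}f(X_{t,a})]=\P_tf$ exactly, by Lemma~\ref{lem_Girs}, so the denominator cancels for free. The first factor becomes $\esp\bigl[\nabla\sqrt f^{\,T}\,J_t^{X_a}R_{t,a}(J_t^{X_a})^T\,\nabla\sqrt f\bigr]$, i.e.\ quadratic in $J$ and linear in $R$; writing $J_t^{X_a}R_{t,a}(J_t^{X_a})^T=\tfrac{a(X_{t,a})}{a(x)}J_t^a(J_t^a)^T$ with $J_t^a=J_t^{X_a}\exp\bigl(-\tfrac12\int_0^t\tfrac{\L_a a}{a}\bigr)$ makes the Gr\"onwall rate come out as exactly $\kappa_a$, and the single ratio $a(X_{t,a})/a(x)$ together with the passage $d\mu\to d\mu_a\to d\mu$ produces precisely one factor of $\|a\|_\infty\|a^{-1}\|_\infty$. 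The $R_{t,a}^{1/2}$ splitting is the missing idea in your proposal.
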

\begin{proof}
	Let $f\in\mathcal C_c^\infty(\R^d,\R)$ be a non-negative function. Ergodicity and $\mu$-invariance of $\PP$ give:
	\[\ent_\mu(f)=-\int_{\R^d}\int_0^{+\infty}\partial_t\left(\P_tf\log\P_tf\right)\,dt\,d\mu=-\int_{\R^d}\int_0^{+\infty}\L[\P_tf]\log\P_tf\,dt\,d\mu.\]
	The integration by parts formula and the intertwining relation yield then:
	\[\ent_\mu(f)=\int_{\R^d}\int_0^{+\infty}\frac{|\nabla\P_tf|^2}{\P_tf}\,d\mu\,dt=\int_{\R^d}\int_0^{+\infty}\frac{\left|\calP_t^{\nabla^2V}(\nabla f)\right|^2}{\P_tf}\;dt\;d\mu.\]
	We focus on the numerator of the right-hand side. More precisely, we aim to cancel out $\P_tf$ at the denominator, which is made possible by Girsanov's theorem. Indeed, the assumptions on $a$ ensure that it satisfies the $(G)$ condition, and Theorem \ref{cor_WFK} leads to
	\[\calP_t^{\nabla^2V}(\nabla f)=\esp[R_{t,a}J_t^{X_a}\nabla f(X_{t,a})],\]
	which rewrites
	\[\calP_t^{\nabla^2V}(\nabla f)=2\esp\left[R_{t,a}^{1/2}J_t^{X_a}\nabla \sqrt f(X_{t,a})R_{t,a}^{1/2}\sqrt f(X_{t,a})\right].\]
	Cauchy-Schwarz' inequality with Lemma \ref{lem_Girs} finally entail
	\begin{align*}
	\left|\calP_t^{\nabla^2V}(\nabla f)\right|^2&\leq 4\esp\left[\left|R_{t,a}^{1/2}J_t^{X_a}\nabla \sqrt f(X_{t,a})\right|^2\right]\esp\left[R_{t,a}f(X_{t,a})\right]\\
	&=4\esp\left[\nabla\sqrt f(X_{t,a})^TJ_t^{X_a}R_{t,a}(J_t^{X_a})^T\nabla\sqrt f(X_{t,a})\right]\P_tf.
	\end{align*}
	This implies then for the entropy:
	\[\ent_\mu(f)\leq4\int_{\R^d}\int_0^{+\infty}\esp\left[\nabla\sqrt f(X_{t,a})^TJ_t^{X_a}R_{t,a}(J_t^{X_a})^T\nabla\sqrt f(X_{t,a})\right]\;dt\;d\mu.\]
	In order to recover the energy term in the $LSI$, one should provide some spectral estimates for $J_t^{X_a}R_{t,a}(J_t^{X_a})^T$. Define then
	\[J_t^a=J_t^{X_a}\exp\left(-\inv2\int_0^t\frac{\L_aa}{a}(X_{s,a})\,ds\right),\]
	which solves the following equation:
	\[dJ_t^a=-J_t^a\left(\nabla^2V(X_{t,a})-\inv2a\L(a\Inv)(X_{t,a})I_d\right)dt.\]
	Indeed, we have on the one hand:
	\[dJ_t^{X_a}=-J_t^{X_a}\nabla^2V(X_{t,a})\,dt,\]
	and on the other hand:
	\[d\left[\exp\left(-\inv2\int_0^t\frac{\L_aa}{a}(X_{s,a})\,ds\right)\right]=-\inv2\frac{\L_aa}{a}(X_{t,a})\exp\left(-\inv2\int_0^t\frac{\L_aa}{a}(X_{s,a})\,ds\right)\,dt.\]
	Moreover, $\L_a(a)/a=-a\L(a\Inv)$, so that both previous points and a chain rule give the expected formula. Since $J_t^{X_a}R_{t,a}(J_t^{X_a})^T=\frac{a(X_{t,a})}{a(x)}J_t^a(J_t^a)^T$, one should focus on spectral estimates for the latter term.\medskip
	
	Therefore, if we let $\varphi(t)=y^TJ_t^a(J_t^a)^Ty$, for some $y\in\R^d$, symmetry of $\nabla^2V$ entails
	\begin{align*}
	d\varphi(t)&=y^TdJ_t^a(J_t^a)^Ty+y^TJ_t^a(dJ_t^a)^Ty\\
	&=-y^TJ_t^a\left(\nabla^2V(X_{t,a})-\inv2a\L(a\Inv)(X_{t,a})I_d\right)(J_t^a)^Ty\,dt\\
	&\qquad-y^TJ_t^a\left(\nabla^2V(X_{t,a})-\inv2a\L(a\Inv)(X_{t,a})I_d\right)^T(J_t^a)^Ty\,dt\\
	&=-y^TJ_t^a\left(2\nabla^2V(X_{t,a})-a\L(a\Inv)(X_{t,a})I_d\right)(J_t^a)^Ty\,dt\\
	&\leq-\kappa_ay^TJ_t^a(J_t^a)^Ty\,dt=-\kappa_a\varphi(t)\,dt,
	\end{align*}
	by definition of $\kappa_a$. Hence, $\varphi(t)\leq e^{-\kappa_at}\varphi(0)$ for any $t\geq0$, which yields
	\[y^TJ_t^a(J_t^a)^Ty\leq e^{-\kappa_at}|y|^2.\]
	We can apply the previous inequality to $y=\sqrt{\frac{a(X_{t,a})}{a(x)}}\nabla\sqrt f(X_{t,a})$ to get
	\[\ent_\mu(f)\leq4\int_{\R^d}\int_0^{+\infty}e^{-\kappa_at}\esp\left[\frac{a(X_{t,a})}{a(x)}|\nabla\sqrt f(X_{t,a})|^2\right]\;dt\;d\mu,\]
	which rewrites
	\[\ent_\mu(f)\leq4\int_0^{+\infty}e^{-\kappa_at}\int_{\R^d}\inv a\P_{t,a}\left(a|\nabla\sqrt f|^2\right)\;d\mu\;dt.\]
	Recall that $d\mu_a/d\mu=1/a^2$. Then, since $a$ is bounded,
	\[\ent_\mu(f)\leq4\|a\|_\infty\int_0^{+\infty}e^{-\kappa_at}\int_{\R^d}\P_{t,a}\left(a|\nabla\sqrt f|^2\right)\;d\mu_a\;dt.\]
	One can use invariance of $\P_{t,a}$ with respect to $\mu_a$, then assumption on $\kappa_a$ to get
	\[\ent_\mu(f)\leq\frac{4\|a\|_\infty}{\kappa_a}\int_{\R^d}a|\nabla\sqrt f|^2\;d\mu_a.\]
	Finally, boundedness of $a\Inv$ entails
	\[\ent_\mu(f)\leq\frac{4\|a\|_\infty\|a\Inv\|_\infty}{\kappa_a}\int_{\R^d}|\nabla\sqrt f|^2\;d\mu,\]
	and the proof is complete replacing $f$ by $f^2$.
\end{proof}

\begin{req}
In terms of perturbation matrices (as presented in \cite{ABJ18} through weighted intertwinings) one has here $A=aI_d$. To take into account the geometry of $\nabla^2V$, a natural extension to this result would be to consider non-homothetic perturbations, for instance of the form $A=\diag(a_1,\dots,a_d)$, where $a_1,\dots,a_d\in\mathcal C^\infty_+(\R^d,\R)$ are distinct functions. In spite of many attempts, the above proof does not transpose to this case, and more general spectral estimates are besides much harder to derive. Generalisation of the representation result and Grönwall-like estimates for such perturbations would then allow an interesting extension to this result.
\end{req}

\begin{req}[Holley-Stroock criterion]\label{req_HS}
One may wish to compare this technique to the well-known Holley-Stroock method (introduced in \cite{HS87} for the Ising model). As a reminder, if $\nu$ is a probability measure that satisfies a $LSI$ and there exists $\Phi:\R^d\to\R$ a bounded continuous function such that $d\mu\propto e^\Phi d\nu$, then $\mu$ satisfies a $LSI$ and
\[c_{LSI}(\mu)\leq e^{2osc(\Phi)}c_{LSI}(\nu),\]
where $osc(\Phi)=\sup(\Phi)-\inf(\Phi)$. Note that $osc(\Phi)$ can poorly depend on the dimension, for example if $\Phi(x)=\sum_{i=1}^d\varphi(x_i)$, in which case $osc(\Phi)=d\cdot osc(\varphi)$. To stick to our framework, one might choose $\Phi=\log(a^2)$ for some bounded perturbation function $a\in\mathcal C_+^\infty(\R^d,\R)$. The above inequality becomes
\[c_{LSI}(\mu)\leq\|a\|_\infty^4\|a\Inv\|_\infty^4c_{LSI}(\mu_a),\]
so that Holley-Stroock method leads to show that $\mu_a$ satisfies a $LSI$. This is conveniently ensured as soon as $\mu_a$ satisfies the Bakry-Émery criterion, namely
\[\inf_{x\in\R^d}\{\rho_-(\nabla^2V_a(x))\}>0.\]
In terms of $a$ and $V$, the above condition rewrites explicitly:
\[\inf_{\R^d}\left\{\rho_-\left(\nabla^2V+\frac{2}{a}\nabla^2a-\frac{2}{a^2}\nabla a(\nabla a)^T\right)\right\}>0,\]
which shall be compared to the spectral estimates involved in $\kappa_a$, that can be expressed as:
\[\inf_{\R^d}\left\{\rho_-(\nabla^2V)+\frac{\Delta a}{a}-\nabla V\cdot\nabla a-\frac{2}{a^2}|\nabla a|^2 \right\}>0.\]
Both expressions do not compare to each other, yet the second one seems to be far more tractable, as it could be illustrated on various examples.
\end{req}

As mentioned before, the above proof can be adapted in some particular cases to improve the estimate on $c_{LSI}(\mu)$. In the following, we thus study the restriction of the latter to monotonic (positive) functions.

\subsubsection{Monotonic functions}

\begin{defin}
	A measurable function $f:\R^d\to\R$ is said to be \emph{monotonic} (in each direction) if for any $i=1,\dots,d$, for any fixed $(x_1,\dots,x_{i-1},x_{i+1},\dots,x_d)\in\R^{d-1}$  , $f_i:x_i\mapsto f(x_1,\dots,x_d)$ is monotonic.
	
	In particular, if $f$ is differentiable, then $f$ is monotonic if and only if $\partial_if$  has a constant sign on $\R^d$ for any $i$. 
\end{defin}

\begin{req}
	In the following, we shall focus on smooth functions $f$ such that all $f_i$ are non-decreasing (resp. non-increasing). In such cases, $f$ will be said to be itself non-decreasing (resp. non-increasing).
\end{req}
\begin{defin}[\textit{(BM)} condition]
	Given the potential $V$, a function $a\in\mathcal C^\infty_+(\R^d,\R)$ is said to satisfy the \emph{Bakry-Michel condition} (in short \textit{(BM)}) if:
	\begin{enumerate}
		\item for any $i,j\in\llbracket 1,d\rrbracket$, $i\neq j$, $\partial^2_{ij}V_a\leq0$;
		\item for any $i\in\llbracket 1,d\rrbracket$, $\sum_{j=1}^{d}\partial^2_{ij}V_a$ is upper bounded,
	\end{enumerate}
\end{defin}
The following proposition is one of the main arguments that allows to improve the estimate on $c_{LSI}(\mu)$ for monotonic functions.
\begin{prop}\label{prop_monotone}
	Let $f\in\mathcal C_+^\infty(\R^d,\R)$ and $a\in\mathcal C_+^\infty(\R^d,\R)$ satisfying (BM). Assume furthermore that $f$ and $a$ are both non-decreasing. Then
	\[ \P_{t,a}f\leq\P_tf,\quad t\geq0. \]
\end{prop}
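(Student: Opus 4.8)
The plan is to compare the two diffusions $(X_{t,a})_{t\geq0}$ and $(X_t)_{t\geq0}$ through a coupling argument, exploiting the fact that the drift of $X_{t,a}$ is $-\nabla V_a = -\nabla V - \nabla\log(a^2)$, i.e. it differs from the drift of $X_t$ by the extra term $-2\nabla a/a$. First I would set up the synchronous coupling: run both SDEs with the \emph{same} Brownian motion $(B_t)_{t\geq0}$ from the same starting point $x$, so that $D_t := X_t - X_{t,a}$ satisfies a deterministic (pathwise) ODE driven by the difference of the drifts, with $D_0 = 0$. The goal is to show that $D_t \geq 0$ coordinatewise for all $t\geq0$ (the natural order on $\R^d$), since then monotonicity of $f$ (non-decreasing in each coordinate) gives $f(X_{t,a}) \leq f(X_t)$ pathwise, and taking expectations yields $\P_{t,a}f(x) = \esp[f(X_{t,a}^x)] \leq \esp[f(X_t^x)] = \P_t f(x)$.

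Next I would justify the coordinatewise comparison $D_t\geq 0$. Writing the $i$-th coordinate, $\dot D_t^{(i)} = -\partial_i V(X_t) + \partial_i V_a(X_{t,a})$; I would decompose this as $-(\partial_i V(X_t) - \partial_i V(X_{t,a})) + (\partial_i V_a - \partial_i V)(X_{t,a}) = -(\partial_i V(X_t) - \partial_i V_a(X_{t,a})) $ rearranged so that the first piece is controlled by the off-diagonal Hessian terms of $V_a$ via the mean value theorem, and the sign-definite "pushing" term comes from condition (BM). Concretely, condition (BM)(1) — that $\partial^2_{ij}V_a \leq 0$ for $i\neq j$ — is exactly the quasi-monotonicity (Kamke–Müller type) condition ensuring that the ODE system for $D_t$ preserves the positive orthant, while (BM)(2) provides the one-sided Lipschitz bound needed for existence/uniqueness and for a Grönwall-type control preventing $D_t$ from crossing zero. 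The monotonicity of $a$ (non-decreasing) enters to fix the sign of the perturbation drift $-2\nabla a/a \leq 0$, so that $X_{t,a}$ is indeed pushed below $X_t$. I would make this rigorous by a standard argument: consider $D_t^{(i)}$ at the first time it would become negative and derive a contradiction from the differential inequality $\dot D_t^{(i)} \geq -C\,(D_t^{(i)})_- - (\text{nonneg. coupling terms})$ obtained from (BM), or equivalently invoke the comparison theorem for cooperative (quasimonotone) ODE systems.

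I expect the main obstacle to be making the pathwise comparison fully rigorous in the multidimensional setting: in dimension one this is an elementary ODE comparison, but in higher dimension one must carefully handle the coupling of all coordinates simultaneously and verify that the off-diagonal negativity in (BM)(1) really does give order preservation for the random ODE $t\mapsto D_t$ — in particular checking that the relevant vector field is quasimonotone increasing along almost every Brownian path, and that no explosion occurs before the comparison can be propagated (this is where the non-explosion hypothesis on $V$, together with (BM)(2), is used). A secondary technical point is justifying differentiation/the mean value theorem representation $\partial_iV(X_t)-\partial_iV(X_{t,a}) = \int_0^1 \nabla\partial_iV\big(X_{t,a}+s D_t\big)\cdot D_t\,ds$ and combining the contributions with the correct signs; once the quasimonotone structure is identified, the conclusion follows by the classical Müller–Kamke comparison principle applied $\omega$ by $\omega$.
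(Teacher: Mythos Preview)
Your coupling approach is essentially correct and is genuinely different from the paper's argument. The paper does not couple the diffusions at all: it interpolates via $\Psi(s)=\P_s(\P_{t-s,a}f)$, computes $\Psi'(s)=\P_s\big[\tfrac{\nabla a}{a}\cdot\nabla\P_{t-s,a}f\big]$, rewrites $\nabla\P_{t-s,a}f$ through the intertwining as $\calP_{t-s,a}^{\nabla^2V_a}(\nabla f)$, and then invokes the Bakry--Michel lemma (Lemma~\ref{lem_BM}) to conclude that this Feynman--Kac semigroup preserves componentwise nonnegativity under (BM). Your route instead yields the stronger pathwise statement $X_{t,a}^x\le X_t^x$ coordinatewise almost surely, from which the semigroup inequality follows by monotonicity of $f$; the price is having to set up the multidimensional Kamke--M\"uller comparison carefully, whereas the paper outsources that work to Lemma~\ref{lem_BM}.

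There is one slip you should fix. In your decomposition you apply the mean value theorem to $V$, writing $\partial_iV(X_t)-\partial_iV(X_{t,a})=\int_0^1\nabla\partial_iV(X_{t,a}+sD_t)\cdot D_t\,ds$, which produces the off-diagonal entries of $\nabla^2V$; but condition (BM)(1) controls the off-diagonal entries of $\nabla^2V_a$, not of $\nabla^2V$. The correct split is
\[
\dot D_t^{(i)}=-\partial_iV_a(X_{t,a}+D_t)+\partial_iV_a(X_{t,a})+2\,\frac{\partial_ia}{a}(X_{t,a}+D_t),
\]
using $\partial_iV=\partial_iV_a-2\partial_ia/a$. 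Now the mean value theorem applied to $V_a$ gives $-\sum_{j}\int_0^1\partial^2_{ij}V_a(\,\cdot\,)D_t^{(j)}\,ds$, and at a boundary point $D_t^{(i)}=0$, $D_t^{(j)}\ge0$ for $j\neq i$, condition (BM)(1) makes this sum nonnegative while the last term is nonnegative by monotonicity of $a$. With this correction the quasimonotone structure matches (BM) exactly, and the rest of your outline (approximation by a strictly positive perturbation to leave the boundary, non-explosion to propagate the comparison) goes through.
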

This proposition is based on a lemma provided by Bakry and Michel in \cite{BM92}, used initially to infer some FKG inequalities in $\R^d$.
\begin{lem}\label{lem_BM}
	Let $M:\R^d\to\mathcal M_d(\R)$ be a measurable map such that $M_{ij}\leq0$ for any $i\neq j$ and $\sum_{j=1}^{d}M_{ij}$ is upper bounded for any $i$, and let $F$ be a smooth vector field on $\R^d$. Then all components of $\calP_t^MF$ are non-negative whenever all components of $F$ are so.
\end{lem}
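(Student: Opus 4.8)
The plan is to prove Lemma~\ref{lem_BM} by a Feynman-Kac representation combined with a comparison (maximum-principle) argument for the associated linear parabolic system, exactly in the spirit of Bakry--Michel \cite{BM92}. Write $u(t,x)=\calP_t^MF(x)$, so that $u$ solves the linear system $\partial_t u=\calL u-Mu$ with $u(0,\cdot)=F$; here $\calL=\L^{\otimes d}$ acts diagonally, so the only coupling between the components $u_1,\dots,u_d$ comes through the zero-order term $M$. The claim is that the cone $\mathcal{C}=\{(v_1,\dots,v_d):v_i\geq0\ \forall i\}$ is invariant under the flow. The key structural observation is the \emph{quasi-positivity} (or ``Metzler'') property: because $M_{ij}\leq0$ for $i\neq j$, the off-diagonal entries of $-M$ are non-negative, which is precisely the algebraic condition making the linear ODE $\dot v=-Mv$ (and hence the full parabolic system) order-preserving on $\mathcal C$.

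First I would reduce to a situation where the zero-order term is non-negative definite in the relevant sense: the hypothesis that $\sum_j M_{ij}$ is upper bounded by some constant $c$ for every $i$ lets me set $\tilde M=M-cI_d$ and $\tilde u(t,x)=e^{ct}u(t,x)$, which solves $\partial_t\tilde u=\calL\tilde u-\tilde M\tilde u$; since $e^{ct}>0$, proving $\tilde u\geq0$ componentwise is equivalent to proving $u\geq0$ componentwise, and now $\tilde M$ still has non-positive off-diagonal entries but additionally $\sum_j\tilde M_{ij}\leq0$, i.e. $-\tilde M$ has non-negative row sums. Then I would run a probabilistic/analytic maximum principle: fix $T>0$, and for $\varepsilon>0$ consider $u^\varepsilon=\tilde u+\varepsilon e^{\lambda t}(1,\dots,1)^T$ for $\lambda$ large enough (chosen using the bound on $\tilde M$) so that $u^\varepsilon$ is a strict supersolution, then argue that the first component-index and first time at which some $u^\varepsilon_i$ hits $0$ leads to a contradiction, using that at such a point $\partial_t u^\varepsilon_i\leq0$, $\L u^\varepsilon_i\geq0$ (spatial minimum, via the diffusion maximum principle / the fact that $\L$ satisfies the positive maximum principle), and $-(\tilde M u^\varepsilon)_i=-\sum_j\tilde M_{ij}u^\varepsilon_j\geq0$ because $u^\varepsilon_j\geq0$ for all $j$ at that time, $-\tilde M_{ii}\geq(\text{row sum bound})$, and $-\tilde M_{ij}\geq0$ for $j\neq i$. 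Letting $\varepsilon\to0$ gives $\tilde u\geq0$, hence $u\geq0$. Alternatively, and perhaps more cleanly given the paper's probabilistic flavour, I would use the explicit Feynman--Kac representation: write the matrix propagator $Q_{s,t}(x)$ solving $\partial_t Q=-M(X_t)Q$ along the diffusion path; quasi-positivity of $-M(X_t)$ gives, by the standard ODE comparison (e.g. via the integral form $Q_{s,t}=I+\int_s^t(-M(X_r))Q_{s,r}\,dr$ and Picard iteration, each iterate preserving $\mathcal C$ once the diagonal has been made dominant by the exponential shift above), that $Q_{s,t}$ has non-negative entries; then $u(t,x)=\esp[Q_{0,t}F(X_t)]$ is a non-negative combination of the non-negative vectors $F(X_t)$, so all its components are non-negative.

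The step I expect to be the main obstacle is the rigorous justification of the maximum-principle/comparison argument in the unbounded setting: the diffusion operator $\L$ on $\R^d$ is not compact, solutions need not be bounded, and the ``first time a component touches zero'' argument requires either an a~priori growth control on $u(t,x)$ as $|x|\to\infty$ or a localisation with cutoff functions and exhaustion of $\R^d$ by balls (together with the non-explosion of $(X_t)_{t\geq0}$ assumed throughout). In the probabilistic formulation this obstacle reappears as the need to control the exponential matrix functional $Q_{0,t}$ — one must check that the Picard iterates converge and that the resulting representation $\esp[Q_{0,t}F(X_t)]$ genuinely equals $\calP_t^M F$ and is finite; the upper bound on $\sum_j M_{ij}$ is exactly what provides the one-sided exponential control $\|Q_{0,t}\|\leq e^{ct}$ needed to make all expectations finite when $F$ has controlled growth. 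Once this technical point is handled, quasi-positivity does all the work.
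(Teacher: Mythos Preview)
The paper does not prove this lemma; immediately after stating it, it writes ``We refer the reader to \cite{BM92} for the proof.'' So there is no in-paper argument to compare against. Your proposal is essentially a reconstruction of the Bakry--Michel argument, and the core idea is exactly right: the condition $M_{ij}\leq0$ for $i\neq j$ makes $-M$ a Metzler (quasi-positive) matrix at every point, so the matrix propagator $Q_{0,t}$ solving $\dot Q=-QM(X_s)$ along each diffusion path is entrywise non-negative, and the Feynman--Kac representation $\calP_t^M F=\esp[Q_{0,t}F(X_t)]$ then exhibits $\calP_t^M F$ as a non-negative combination of the non-negative vectors $F(X_t)$. Your parallel maximum-principle sketch is the PDE counterpart of the same mechanism, and your identification of the unbounded-domain/localisation issue as the main technical obstacle is accurate and is precisely the point that requires care in \cite{BM92}.

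One imprecision worth flagging: the row-sum hypothesis $\sum_j M_{ij}\leq c$ does not give the bound $\|Q_{0,t}\|\leq e^{ct}$ you assert. With $Q\geq0$ and $\dot Q=-QM$ one gets
\[
\frac{d}{dt}(Q\mathbf 1)_i=-\sum_j Q_{ij}(M\mathbf 1)_j\geq -c\,(Q\mathbf 1)_i,
\]
which is only the \emph{lower} bound $(Q\mathbf 1)_i\geq e^{-ct}$; an upper bound on the $\ell^\infty$-operator norm of $Q$ would need a lower bound on the row sums, which is not assumed. This also affects your maximum-principle sketch: after the shift to $\tilde M=M-cI$, the forcing term in your supersolution $u^\varepsilon=\tilde u+\varepsilon e^{\lambda t}\mathbf 1$ is $\varepsilon e^{\lambda t}(\lambda\mathbf 1+\tilde M\mathbf 1)$, and you need $\lambda>-(\tilde M\mathbf 1)_i$ at the (unknown) contact point; without a lower bound on the row sums this cannot be arranged globally, so the argument must be run after localisation to balls (where $M$ is bounded and such a $\lambda$ exists), exactly as you anticipate in your final paragraph. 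None of this undermines your scheme---the non-negativity of $Q_{0,t}$, which is what delivers the conclusion, follows from the off-diagonal sign condition alone once the ODE along the path is well posed---but the role of the row-sum bound is well-posedness/comparison rather than a direct norm estimate on $Q$.
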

We refer the reader to \cite{BM92} for the proof. We can now provide a proof of Proposition~\ref{prop_monotone}.

\begin{proof}
	The proof relies on very classical techniques. Let $t\geq0$ be fixed and take $f\in\mathcal C_+^\infty(\R^d,\R)$ a non-decreasing function. Define, for any $s\in[0,t]$,
	\[ \Psi(s)=\P_s(\P_{t-s,a}f). \]
	Since $\Psi(0)=\P_{t,a}f$ and $\Psi(t)=\P_tf$, we aim to prove that $\Psi$ is non-decreasing. One has, for any $s\in[0,t]$,
	\[ \Psi'(s)=\P_s[(\L-\L_a)\P_{t-s,a}f], \]
	which rewrites accordingly
	\[ \Psi'(s)=\P_s\left[\frac{\nabla a}{a}\cdot\nabla\P_{t-s,a}f\right]=\P_s\left[\frac{\nabla a}{a}\cdot\calP_{t-s,a}^{\nabla^2V_a}(\nabla f)\right]. \]
	Since $f$ is non-decreasing, all entries of $\nabla f$ are non-negative, and since $a$ satisfies \emph{(BM)}, Lemma \ref{lem_BM} implies that all entries of $\calP_{t-s,a}^{\nabla^2V_a}(\nabla f)$ are non-negative. Moreover, $a$ is positive and non decreasing, so that
	\[ \frac{\nabla a}{a}\cdot\calP_{t-s,a}^{\nabla^2V_a}(\nabla f)\geq0. \]
	Hence, since $\P_s$ preserves the positivity, $\Psi'(s)\geq0$ and the proof is over.
\end{proof}

\begin{req}
	In dimension $d=1$, due to the particular form of the Feynman-Kac semigroup $(\calP_{t,a}^{\nabla^2V_a})_{t\geq0}$, Proposition \ref{prop_monotone} still holds if one only assumes that $a$ is positive and $a$ and $f$ are both non-decreasing.
\end{req}

Proposition \ref{prop_monotone} enables us to adapt the proof of Theorem \ref{thm_LSI} and improve the estimate on $c_{LSI}(\mu)$. Moreover, the proof allows to handle unbounded perturbation functions (as long as the $(G)$ condition is satisfied).

\begin{thm}\label{thm_mono}
	Let $a\in\mathcal C_+^\infty(\R^d,\R)$ be non-decreasing. Define 
	\[\tilde\kappa_a=\inf\limits_{x\in\R^d}\left\{\rho_-(\nabla^2V(x))-a\L(a\Inv)(x)\right\}.\]
	If $a$ satisfies (BM), (G) and $\tilde\kappa_a>0$, then for any non-decreasing $f\in\mathcal C_+^\infty(\R^d,\R)$,
	\[\ent_\mu(f^2)\leq\frac{2}{\tilde\kappa_a}\int_{\R^d}|\nabla f|^2\;d\mu.\]
\end{thm}
\begin{proof}
	Let $f\in\mathcal C^\infty_+(\R^d,\R)$ be non-decreasing. The beginning of the proof is very similar to the one of Theorem \ref{thm_LSI}. Indeed, the entropy rewrites
	\[\ent_\mu(f)=\int_{\R^d}\int_0^{+\infty}\frac{\left|\calP_t^{\nabla^2V}(\nabla f)\right|^2}{\P_tf}\;dt\;d\mu,\]
	with the representation
	\[\calP_t^{\nabla^2V}(\nabla f)=2\esp\left[R_{t,a}J_t^{X_a}\nabla \sqrt f(X_{t,a})\sqrt f(X_{t,a})\right],\]
	since $a$ satisfies $(G)$. Theorem \ref{cor_WFK} and Cauchy-Schwartz' inequality imply here
	\begin{align*}
		\left|\calP_t^{\nabla^2V}(\nabla f)\right|^2&\leq4\esp\left[R_{t,a}^2|J_t^{X_a}\nabla \sqrt f(X_{t,a})|^2\right]\overbrace{\esp[f(X_{t,a})]}^{\P_{t,a}f}\\
		&\leq4\esp\left[R_{t,a}^2|J_t^{X_a}\nabla \sqrt f(X_{t,a})|^2\right]\P_tf,
	\end{align*}
	using Proposition \ref{prop_monotone}. Plugged into the entropy, this yields
	\[\ent_\mu(f)\leq4\int_{\R^d}\int_0^{+\infty}\esp\left[\nabla\sqrt f(X_{t,a})^TJ_t^{X_a}R_{t,a}^2(J_t^{X_a})^T\nabla\sqrt f(X_{t,a})\right]\;dt\;d\mu.\]
	Here we let
	\[J_t^a=J_t^{X_a}\exp\left(-\int_0^t\frac{\L_aa}{a}(X_{s,a})\,ds\right),\]
	and the same reasoning as in the proof of Theorem \ref{thm_LSI} gives then
	\[\ent_\mu(f)\leq4\int_{\R^d}\int_0^{+\infty}e^{-2\tilde\kappa_at}\esp\left[\frac{a(X_{t,a})^2}{a(x)^2}|\nabla\sqrt f(X_{t,a})|^2\right]\;dt\;d\mu.\]
	Hence, using $\mu_a$-invariance of $(\P_{t,a})_{t\geq0}$,
	\begin{align*}
		\ent_\mu(f)&\leq4\int_0^{+\infty}e^{-2\tilde\kappa_at}\int_{\R^d}\P_{t,a}\left(a^2|\nabla\sqrt  f|^2\right)\;d\mu_a\;dt\\
		&=4\int_0^{+\infty}e^{-2\tilde\kappa_at}\int_{\R^d}|\nabla\sqrt  f|^2\;d\mu\;dt=\frac{2}{\tilde\kappa_a}\int_{\R^d}|\nabla\sqrt  f|^2\;d\mu,
	\end{align*}
	and the proof is achieved replacing $f$ by $f^2$.
\end{proof}

\section{Examples}

In this section, we illustrate the Feynman-Kac approach on some examples. Since the perturbation function we introduce is scalar-valued, the method will be particularly suitable for potentials whose Hessian matrix admits many symmetries, for instance radial potentials. The examples we focus on shall then pertain to this class of potentials, namely here Subbotin and double-well potentials. Let us mention that, using other techniques, similar results for compactly supported radial measures were recently derived by Cattiaux, Guillin and Wu in \cite{CGW19}.

For the sake of concision, we restrain ourselves to the illustration of Theorem \ref{thm_LSI}. We eventually briefly resume the comparison to Holley-Stroock method.

\subsection{Subbotin potentials}
The first example we focus on is the general Subbotin\footnote{after Mikhail Fedorovich Subbotin, 1893--1966, Soviet mathematician} distribution  \cite{Sub23}. We take then $V(x)=|x|^\alpha/\alpha$ for $\alpha>2$, to ensure that $V$ is at least twice continuously differentiable, but Bakry-Émery criterion does not apply (see the following proof and remark thereafter).
\begin{lem}\label{lem_eigenval}
Let $a\in\mathcal C_+^\infty(\R^d,\R)$. Then for any $x\in\R^d$,
\[\rho_-(2\nabla^2V(x))-a\L(a\Inv)(x)=2|x|^{\alpha-2}-a\L(a\Inv)(x).\]
\end{lem}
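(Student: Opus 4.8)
The plan is to observe that the scalar term $a\L(a\Inv)(x)$ plays no role in the spectral computation, so the lemma is equivalent to the claim $\rho_-(\nabla^2V(x))=|x|^{\alpha-2}$ for every $x\in\R^d$, where $V(x)=|x|^\alpha/\alpha$. The whole argument then amounts to an explicit Hessian computation followed by the diagonalisation of a rank-one perturbation of a multiple of the identity; once $\rho_-(\nabla^2V(x))$ is identified, one simply multiplies by $2$ and subtracts $a\L(a\Inv)(x)$.

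First I would compute, for $x\neq0$ and with $r=|x|$, the gradient $\nabla V(x)=r^{\alpha-2}x$, and differentiate once more to get
\[\nabla^2V(x)=r^{\alpha-2}I_d+(\alpha-2)\,r^{\alpha-4}\,xx^T.\]
Since $xx^T/|x|^2$ is the orthogonal projector onto the line $\R x$, this matrix acts as multiplication by $(\alpha-1)r^{\alpha-2}$ on $\R x$ and as multiplication by $r^{\alpha-2}$ on the hyperplane $x^\perp$; hence its eigenvalues are $(\alpha-1)r^{\alpha-2}$ (simple) and $r^{\alpha-2}$ (with multiplicity $d-1$). Using $\alpha>2$, hence $\alpha-1>1$, the smallest of these is $r^{\alpha-2}$, so that $\rho_-(\nabla^2V(x))=|x|^{\alpha-2}$ away from the origin and $\rho_-(2\nabla^2V(x))=2|x|^{\alpha-2}$.

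It remains to treat $x=0$. For $\alpha>2$ the potential is $\mathcal C^2$, and both the matrix above and the number $|x|^{\alpha-2}$ converge to $0$ as $x\to0$ (when $2<\alpha<4$ the coefficient $r^{\alpha-4}$ blows up, but it multiplies $xx^T=O(|x|^2)$, so the product still vanishes); hence $\nabla^2V(0)=0$ and the identity reduces to $0=0$ there as well. Subtracting $a\L(a\Inv)$ from both sides then gives the announced equality everywhere. I do not expect a genuine obstacle: the only point deserving a word of care is precisely this limiting behaviour at the origin in the range $2<\alpha<4$. Note that the computation also makes transparent why the Bakry--Émery criterion fails, since $\rho_-(\nabla^2V(x))=|x|^{\alpha-2}\to0$ as $x\to0$.
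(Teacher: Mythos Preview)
Your proof is correct and follows essentially the same approach as the paper: compute the Hessian explicitly as $|x|^{\alpha-2}I_d+(\alpha-2)|x|^{\alpha-4}xx^T$, diagonalise this rank-one perturbation of a multiple of the identity along $\R x\oplus(\R x)^\perp$, and use $\alpha>2$ to identify the smallest eigenvalue. Your version is marginally tidier in stripping off the scalar $a\L(a^{-1})$ at the outset, and you add a short discussion of the limiting case $x=0$ that the paper leaves implicit.
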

\begin{proof}
First, notice that for any fixed $x\in\R^d$,
\[\nabla^2V(x)=(\alpha-2)|x|^{\alpha-4}xx^T+|x|^{\alpha-2}I_d.\]
Hence, $T_x:=2\nabla^2V(x)-a\L(a\Inv)(x)I_d$ (seen as an element of $\mathcal L(\R^d)$), can be written as the sum of a rank 1 operator (projection on $\R x$) and a full-rank operator (multiple of the identity). One can then write $\R^d=\R x\oplus(\R x)^\bot$. Let $\lambda$ be a non-zero eigenvalue of $T_x$ and $y$ be an associated eigenvector. Then
\begin{itemize}
\item either $y\in\R x$, that is, $y=\beta x$ for some $\beta\in\R^*$, and one can write
\[\lambda y=T_xy=2\beta(\alpha-2)|x|^{\alpha-2}x+2\beta|x|^{\alpha-2}x-\beta a\L(a\Inv)(x)x,\]
which leads to
\[\lambda=2(\alpha-1)|x|^{\alpha-2}-a\L(a\Inv)(x);\]
\item either $y\in(\R x)^\bot$, in which case
\[\lambda y=T_xy=2|x|^{\alpha-2}y-a\L(a\Inv)(x)y,\]
which entails
\[\lambda=2|x|^{\alpha-2}-a\L(a\Inv)(x).\]
\end{itemize}
Hence for any $x\in\R^d$, since $\alpha>2$,
\[\rho_-(2\nabla^2V(x))-a\L(a\Inv)(x)=\rho_-(T_x)=2|x|^{\alpha-2}-a\L(a\Inv)(x).\]
\end{proof}

In the following, we may focus on the $\alpha=4$ (quadric) case. Indeed, computations turn out to be particularly difficult in full generality, as well as keeping track of dependency in both parameters $d$ and $\alpha$. Bakry-Émery criterion clearly does not apply to this particular potential, since $\rho_-(\nabla^2V(x))$ vanishes at point $x=0$.

\begin{thm}\label{thm_subb}
Let $\mu(dx)\propto \exp(-|x|^4/4)dx$. There exists a universal explicit constant $c>0$ such that for any $f\in\mathcal C_c^\infty(\R^d,\R)$, one has
\[\ent_\mu(f^2)\leq c\int_{\R^d}|\nabla f|^2d\mu.\]
In particular, $c$ does not depend on the dimension.
\end{thm}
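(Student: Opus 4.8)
The strategy is to apply Theorem~\ref{thm_LSI} with a well-chosen radial perturbation function $a$, and to check that all the hypotheses (boundedness of $a$, $a^{-1}$, $|\nabla a|$, and positivity of $\kappa_a$) hold with dimension-free bounds. By Lemma~\ref{lem_eigenval} with $\alpha=4$, for any candidate $a$ we have
\[
\kappa_a=\inf_{x\in\R^d}\left\{2|x|^{2}-a\L(a\Inv)(x)\right\},
\]
so the whole problem reduces to controlling the scalar quantity $a\L(a\Inv)$ from above by something strictly smaller than $2|x|^2$. The natural choice is to take $a$ radial, say $a(x)=h(|x|^2)$ or $a(x)=h(|x|)$ for a smooth positive function $h$ on $[0,\infty)$ that is bounded with bounded inverse — for instance something like $h(r)=1+\varepsilon\,\chi(r)$ where $\chi$ interpolates between $0$ near the origin and a constant at infinity, or more simply a function of the form $a(x)=(1+|x|^2)^{-\delta}$ truncated (or rescaled) so as to stay bounded away from $0$ and $\infty$. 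The point of such a choice is that near the origin, where $2|x|^2$ degenerates, the term $-a\L(a\Inv)$ must be made strictly positive (this is exactly where Bakry--Émery fails and where the perturbation has to do work), while at infinity $2|x|^2\to\infty$ dominates any bounded contribution from $a\L(a\Inv)$.

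**Key steps, in order.** First I would write out $\L(a\Inv)$ explicitly for a radial $a$: with $a\Inv(x)=g(r)$, $r=|x|$, one has $\L(a\Inv)=g''(r)+\frac{d-1}{r}g'(r)-V'(r)g'(r)=g''+\frac{d-1}{r}g'-r^{3}g'$ (using $V(x)=r^4/4$, so $\nabla V\cdot\nabla$ acting radially gives $r^3\partial_r$). Then $a\L(a\Inv)=h(r)\bigl(g''+\tfrac{d-1}{r}g'-r^3 g'\bigr)$ with $g=1/h$. The crucial observation is to choose $h$ so that the potentially dangerous dimension-dependent term $\frac{d-1}{r}g'$ has a favourable sign: if $g$ is non-increasing (i.e.\ $a$ non-decreasing), then $g'\le 0$ and $\frac{d-1}{r}g'\le 0$, so this term only helps and the dimension $d$ drops out of any upper bound on $a\L(a\Inv)$. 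Second, I would bound the remaining terms $h g''$ and $-h r^3 g'$ uniformly in $r$; the $-hr^3g'$ term is non-negative and, for a well-chosen $h$, stays bounded (it decays like $r^3 h'$ times bounded factors, which forces $h'$ to decay faster than $r^{-3}$ — hence the need for a perturbation that flattens out quickly, such as $h$ constant outside a compact set). Third, I would verify that at $r=0$ the quantity $-a\L(a\Inv)(0)>0$ strictly, which pins down the requirement $h''(0)<0$ (or the analogous condition), and then optimise the free parameters to get an explicit value of $\kappa_a>0$ and hence of $c=4\|a\|_\infty\|a\Inv\|_\infty/\kappa_a$. Finally, I would note that every bound obtained is manifestly independent of $d$, which gives the dimension-free conclusion.

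**Main obstacle.** The delicate point is the simultaneous control near the origin and at infinity with a \emph{single} bounded function $a$: at infinity one needs $a\L(a\Inv)$ to stay bounded (so $h$ must become essentially constant, forcing $h'$ and $h''$ to vanish fast), while near the origin one needs $-a\L(a\Inv)$ bounded below by a positive constant (so $h$ must genuinely bend, $h''(0)<0$), and in the intermediate region one must make sure $a\L(a\Inv)(x)<2|x|^2$ throughout, not merely at the two ends. Producing an explicit $h$ (with explicit constants) for which the inequality $2|x|^2-a\L(a\Inv)(x)\ge\kappa_a>0$ holds \emph{uniformly} in $r\in[0,\infty)$, and then extracting a clean closed-form $c$, is the real computational heart of the argument; the sign trick making the $(d-1)/r$ term harmless is what guarantees that once such an $h$ is found, the resulting constant is automatically dimension-free.
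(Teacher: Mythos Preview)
Your approach is essentially correct and reaches the same conclusion as the paper, but by a genuinely different mechanism for eliminating the dimension. The paper chooses the explicit perturbation $a(x)=\exp\bigl(\tfrac{\varepsilon}{2}\arctan(|x|^2)\bigr)$ with a \emph{dimension-dependent} parameter $\varepsilon\sim 1/(d+1)$: this forces the infimum defining $\kappa_a$ to be attained at the origin with value $\kappa_a=\varepsilon d$, so that $\varepsilon d$ stays of order~$1$ while $\|a\|_\infty\|a^{-1}\|_\infty=e^{\varepsilon\pi/4}$ stays close to~$1$; the dimension then cancels arithmetically in the ratio $4\|a\|_\infty\|a^{-1}\|_\infty/\kappa_a$. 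You instead observe that for $a$ radial and non-decreasing the term $h\,\tfrac{d-1}{r}g'$ in $a\L(a^{-1})$ is non-positive and can simply be discarded, yielding a lower bound on $2|x|^2-a\L(a^{-1})$ that is manifestly dimension-free --- in effect the $d=1$ case of the paper's computation, but now valid for every $d$. Your sign trick is conceptually cleaner (a single fixed $a$ works in all dimensions, no balancing of parameters), whereas the paper's route yields a sharper constant in high dimension (its bound tends to $3\sqrt3/2$ as $d\to\infty$, while yours is pinned at the $d=1$ value). Two small slips to correct: the condition at the origin is $g''(0)<0$, which for $g=1/h$ with $h'(0)=0$ translates to $h''(0)>0$, not $h''(0)<0$; and your candidate $a(x)=(1+|x|^2)^{-\delta}$ is \emph{decreasing}, which flips the sign of the $(d-1)$ term and destroys the trick --- stick to a non-decreasing $a$ such as your first suggestion $h=1+\varepsilon\chi$, or indeed the paper's $\arctan$ choice with a fixed~$\varepsilon$.
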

From the proof, one infers that $c=3e\sqrt 3$ is a suitable constant, yet highly dependent on the way computations are handled.
\begin{proof}
The first concern about making use of Theorem \ref{thm_LSI} stands in the choice of the perturbation function $a$. In practice, $a$ should correct a lack of convexity of $V$ where it occurs (namely where $\nabla^2V(x)\leq0$, here at $x=0$). One of the first choices turns out to be the function $$a(x)=\exp\left(\frac{\varepsilon}{2}\arctan(|x|^2)\right),\quad x\in\R^d.$$
Indeed, the arctangent function behaves like the identity near zero (where lies the lack of convexity of $V$) and like a constant at infinity (ensuring that $a$ is bounded above and below). Furthermore, the square function is uniformly convex on $\R^d$, so that the Hessian matrix of the above is positive definite near the origin. Finally, taking exponential, $a$ is indeed positive and computations are easier. Note that this choice is motivated by some results on the spectral gap, in which case the choice of a perturbation function that is close to non-integrability can provide relevant estimates on the Poincaré constant (see for example \cite{BJ14,ABJ18}).
  
The next step in the method consists in the explicit computation of $\kappa_a$. With this definition of $a$, one has
\[-a\L(a\Inv)(x)=\varepsilon\frac{d+|x|^4(d-4)}{(1+|x|^4)^2}-\varepsilon\frac{|x|^4}{1+|x|^4}-\varepsilon^2\frac{|x|^2}{(1+|x|^4)^2},\quad x\in\R^d,\]
and shall then minimize in $x\in\R^d$:
\[2|x|^2+\varepsilon\frac{d+|x|^4(d-4)}{(1+|x|^4)^2}-\varepsilon\frac{|x|^4}{1+|x|^4}-\varepsilon^2\frac{|x|^2}{(1+|x|^4)^2},\]
which rewrites, setting $t=|x|^2$,
\[\kappa_a=\inf\limits_{t\geq0}\left(2t+\varepsilon\frac{d+t^2(d-4)}{(1+t^2)^2}-\varepsilon\frac{t^2}{1+t^2}-\varepsilon^2\frac{t}{(1+t^2)^2}\right).\]

Optimization of polynomials is hardly explicit in most cases, especially when one must keep track of all parameters (namely $\varepsilon$ and $d$). We shall then focus here on the case where the infimum is reached for $t=0$, that is, for any $t\geq0$,
\[2t^4-\varepsilon(d+1)t^3+4t^2-\varepsilon(d+5)t+2-\varepsilon^2\geq0.\]

Let us denote by $g$ the above polynomial function. Clearly, $\varepsilon\leq\sqrt2$ is a necessary, yet not sufficient condition for $g$ to be non negative. In order to make computations more tractable, let us assume that $g''$ is positive. This is true as soon as
\[\varepsilon<\frac{8}{\sqrt3(d+1)}.\]
Consider then $\varepsilon\leq\dfrac{8}{3\sqrt3(d+1)}$. With this choice of $\varepsilon$, given that $d\geq1$, one has for any $t\geq0$
\[g(t)\geq2t^4-\frac{8t^3}{3\sqrt3}+4t^2-\frac{8t}{\sqrt3}+2-\frac{16}{27}.\]
It is easy to see that the above right-hand side is non-negative, so that $g$ is non-negative either over $\R_+$. We can then take $\kappa_a=\varepsilon d$, and Theorem~\ref{thm_LSI} entails the following estimate:
\[c_{LSI}(\mu)\leq\frac{4e^{\varepsilon\pi/4}}{\varepsilon d},\]
with $\varepsilon\leq\dfrac{8}{3\sqrt3(d+1)}$ (which implies that $\varepsilon\leq\sqrt2$). We finally minimize this bound with respect to $\varepsilon\in\left(0,\frac{8}{3\sqrt3(d+1)}\right]$ to get
\[c_{LSI}(\mu)\leq\frac{3\sqrt3(d+1)}{2d}e^{2\pi/3\sqrt3(d+1)}.\]
The above is uniformly bounded with respect to $d\in\N^\star$, and one can take $c=3e\sqrt3$ as the universal constant mentioned in the theorem.
\end{proof}

\begin{req}
This proof points out the main concerns about Theorem \ref{thm_LSI}. Indeed, the choice of the function (or family of functions) $a$ is a key point. Nevertheless, the most important, yet technical, part of the proof is the explicit computation of $\kappa_a$, given that track should be kept of all parameters. 

Note that, up to some numerical constant, the bound on $\varepsilon$ in the previous proof is optimal (with this optimization method). Recall that the problem reduces to the prove that the function $g$ defined on $\R_+$ as
\[g(t)=2t^4-\varepsilon(d+1)t^3+4t^2-\varepsilon(d+5)t+2-\varepsilon^2,\quad t\geq0,\]
is non-negative. If we assume that $\varepsilon$ is of order $(d+1)^{-r}$ for some $r\in(0,1)$, then when $d$ is large, for any fixed positive $t$,
\[g(t)\sim2t^4-d^{1-r}t^3+4t^2-d^{1-r}t+2-d^{-2r},\]
and taking $t=3/d^{1-r}$ leads to
\[g(3/(d+1)^r)\sim\frac{162}{d^{4(1-r)}}+\frac9{d^{2(1-r)}}-\frac1{2^{2r}}-1<0\]
when $d$ increases, which prevents the infimum of $t\mapsto\varepsilon d+tg(t)$ to be reached at $t=0$.

We do not know if the constant we inferred is optimal (in terms of the dimension). Yet, one can note that, for example from \cite{BJM16}, since the spectral gap for the quadric Subbotin distribution is of order $\sqrt{d}$, it is reasonable to expect $c_{LSI}(\mu)$ to be of order $1/\sqrt d$ (since $\mu$ satisfies a Poincaré inequality with constant $c$ (which is the inverse of the spectral gap) as soon as it satisfies a $LSI$ with constant $2c$, see \cite{BGL14} \S 5.1.2). It is then unclear that we can reach optimality with this very optimization procedure. More reliable optimization techniques would be then a good improvement regarding explicit estimates using this result.
\end{req}

\begin{req}
The Holley-Stroock method as developed in Remark~\ref{req_HS} leads, in the present case and after tedious computations, to a conclusion somewhat comparable to ours. Nevertheless, the involved constants are not fully explicit and leave less room for improvement than our above approach. 
\end{req}

\subsection{Double-well potentials}
The following example is a perturbation of the previous one, known as the double-well potential. Consider $V(x)=|x|^4/4-\beta|x|^2/2$, where $\beta>0$ controls the size of the concave region. Although $V$ is convex at infinity, its Hessian matrix is negative definite near the origin, and Bakry-Émery criterion does not apply. Still, one can expect to recover the behaviour inferred in Theorem \ref{thm_subb} when $\beta$ is small.

Similarly to the Subbotin case, one can explicitly compute the Hessian matrix of $V$ to get the following lemma.
\begin{lem}
Let $a\in\mathcal C_+^\infty(\R^d,\R)$. Then for any $x\in\R^d$,
\[\rho_-(2\nabla^2V(x)-a\L(a\Inv)(x)I_d)=2|x|^2-2\beta-a\L(a\Inv)(x).\]
\end{lem}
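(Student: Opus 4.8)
The plan is to mimic the structure of Lemma~\ref{lem_eigenval}, the only difference being the extra linear term $-\beta|x|^2/2$ in the potential. First I would compute the Hessian of $V(x)=|x|^4/4-\beta|x|^2/2$ explicitly. Since $\nabla(|x|^4/4)=|x|^2x$, differentiating once more gives $\nabla^2(|x|^4/4)=|x|^2I_d+2xx^T$; wait, one must be careful with the constants: $\nabla^2(|x|^\alpha/\alpha)=(\alpha-2)|x|^{\alpha-4}xx^T+|x|^{\alpha-2}I_d$, which for $\alpha=4$ reads $2xx^T+|x|^2I_d$. Similarly $\nabla^2(-\beta|x|^2/2)=-\beta I_d$. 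Hence $\nabla^2V(x)=2xx^T+(|x|^2-\beta)I_d$, and therefore
\[
T_x:=2\nabla^2V(x)-a\L(a\Inv)(x)I_d=4xx^T+\bigl(2|x|^2-2\beta-a\L(a\Inv)(x)\bigr)I_d.
\]

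Next I would diagonalise $T_x$ exactly as in the proof of Lemma~\ref{lem_eigenval}: split $\R^d=\R x\oplus(\R x)^\perp$. On $(\R x)^\perp$ the rank-one term $xx^T$ acts as zero, so every vector orthogonal to $x$ is an eigenvector of $T_x$ with eigenvalue $2|x|^2-2\beta-a\L(a\Inv)(x)$ (this eigenspace has dimension $d-1$ when $x\neq0$). On the line $\R x$, we have $T_xx=4|x|^2x+(2|x|^2-2\beta-a\L(a\Inv)(x))x$, giving the eigenvalue $6|x|^2-2\beta-a\L(a\Inv)(x)$. At $x=0$ the matrix is simply $(-2\beta-a\L(a\Inv)(0))I_d$, consistent with both formulas evaluated at $x=0$. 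Since $|x|^2\geq0$, the smaller of the two eigenvalues is always $2|x|^2-2\beta-a\L(a\Inv)(x)$, which gives the claimed value of $\rho_-$.

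There is essentially no obstacle here: the argument is a routine eigenvalue computation for a sum of a scalar matrix and a rank-one positive semidefinite matrix, the key observation being that adding the rank-one term $4xx^T$ can only \emph{increase} eigenvalues, so the minimum is attained on the orthogonal complement of $x$. The only point requiring a line of care is checking that the formula obtained for $x\neq0$ extends continuously (in fact by the same formula) to $x=0$, which is immediate. I would write the proof in the same two-bullet style as Lemma~\ref{lem_eigenval}, noting at the start the explicit form of $\nabla^2V$ and then reading off the two eigenvalues.
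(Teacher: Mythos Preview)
Your proposal is correct and follows exactly the approach the paper indicates: the paper's own proof simply reads ``The proof is identical to the one of Lemma~\ref{lem_eigenval},'' and what you have written is precisely that argument carried out with the extra $-\beta I_d$ term in $\nabla^2V$. The computation of $\nabla^2V(x)=2xx^T+(|x|^2-\beta)I_d$, the splitting $\R^d=\R x\oplus(\R x)^\perp$, and the identification of the two eigenvalues $6|x|^2-2\beta-a\L(a\Inv)(x)$ and $2|x|^2-2\beta-a\L(a\Inv)(x)$ are all correct.
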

\begin{proof}
The proof is identical to the one of Lemma~\ref{lem_eigenval}.
\end{proof}
\begin{thm}
Let $\mu(dx)\propto\exp(-|x|^4/4+\beta|x|^2/2)dx$, $\beta\in(0,1/2)$. There exists $c_\beta>0$ a universal constant such that, for any function $f\in\mathcal C_c^\infty(\R^d,\R)$, one has
\[\ent_\mu(f^2)\leq c_\beta\int_{\R^d}|\nabla f|^2d\mu.\]
Again, $c_\beta$ does not depend on the dimension.
\end{thm}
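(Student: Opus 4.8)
The plan is to mimic the proof of Theorem~\ref{thm_subb}, using the same family of perturbation functions but adjusting the bound on the parameter $\varepsilon$ to absorb the extra term $-2\beta$ coming from the double-well. Concretely, I would again set
\[a(x)=\exp\left(\frac{\varepsilon}{2}\arctan(|x|^2)\right),\quad x\in\R^d,\]
which is bounded above and below with $\|a\|_\infty=e^{\varepsilon\pi/4}$ and $\|a\Inv\|_\infty=1$, and which corrects the lack of convexity near the origin (the region where $\nabla^2V(x)=(|x|^2-\beta)I_d+2xx^T$ is negative definite now being the ball $|x|^2<\beta$ rather than the single point $0$). The term $-a\L(a\Inv)(x)$ is exactly the same as in the Subbotin $\alpha=4$ computation since it does not involve $V$ — wait, it does involve $V$ through $\L$; so I would recompute $-a\L(a\Inv)$ for the double-well potential, which differs from the quadric case only by the lower-order contribution of $-\nabla V\cdot\nabla a$, i.e. by a term proportional to $\beta|x|^2/(1+|x|^4)$.

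Second, by the preceding lemma, $\kappa_a=\inf_{x}\{2|x|^2-2\beta-a\L(a\Inv)(x)\}$, which after the substitution $t=|x|^2$ becomes an explicit one-variable minimization of a rational function in $t$ with parameters $\varepsilon,\beta,d$. As in the Subbotin proof I would restrict attention to the regime where the infimum is attained at $t=0$; this reduces to showing that a quartic polynomial $g_\beta(t)=2t^4-\varepsilon(d+1)t^3+(4-2\beta)t^2-\varepsilon(d+5-\text{something})t+(2\beta\varepsilon+?-\varepsilon^2)+\cdots\geq 0$ on $\R_+$ — I would write it out carefully, but the structure is: the constant term is $-2\beta-a\L(a\Inv)(0)=-2\beta+\varepsilon d-\varepsilon^2$, so requiring $g_\beta(0)\geq 0$ forces $\varepsilon d-\varepsilon^2\geq 2\beta$, hence roughly $\varepsilon\geq 2\beta/d$ (plus higher order). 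Combining this lower bound on $\varepsilon$ with the upper bound $\varepsilon\leq \frac{8}{3\sqrt3(d+1)}$ needed for $g_\beta''>0$ (exactly as before, since the $\beta t^2$ term only helps convexity), one gets a nonempty admissible interval for $\varepsilon$ precisely when $\beta$ is small enough — this is where the hypothesis $\beta\in(0,1/2)$ enters. Then $\kappa_a=\varepsilon d-2\beta-\varepsilon^2$ (or the value $g_\beta(0)$), and Theorem~\ref{thm_LSI} gives
\[c_{LSI}(\mu)\leq\frac{4e^{\varepsilon\pi/4}}{\varepsilon d-2\beta-\varepsilon^2},\]
which I would optimize over the admissible range of $\varepsilon$, obtaining a bound depending only on $\beta$ (the dimension dependence cancelling just as in Theorem~\ref{thm_subb}), and finally replace $f$ by $f^2$ to conclude.

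The main obstacle I anticipate is the polynomial non-negativity step: keeping simultaneous track of $\varepsilon$, $\beta$ and $d$ in the quartic $g_\beta$, verifying that $g_\beta''>0$ on $\R_+$ under the stated bound on $\varepsilon$, and then checking $g_\beta(0)\geq 0\Rightarrow g_\beta\geq 0$ requires a careful — though elementary — case analysis, and one must make the right choice of how large to take $\varepsilon$ relative to $2\beta/d$ so that the final constant is genuinely dimension-free and blows up only as $\beta\to 1/2$. A secondary technical point is to double-check that $a$ satisfies all hypotheses of Theorem~\ref{thm_LSI} (boundedness of $a$, $a\Inv$, $|\nabla a|$, and $\kappa_a>0$) and that the non-explosion hypothesis of Remark~\ref{req_explo} holds for the double-well potential, which it does since $\nabla^2V(x)\geq -\beta I_d$.
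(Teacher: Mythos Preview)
Your approach is essentially the paper's: same perturbation $a(x)=\exp(\tfrac{\varepsilon}{2}\arctan|x|^2)$, same reduction via the preceding lemma to a one-variable infimum, same strategy of forcing the minimum at $t=0$ and checking nonnegativity of a quartic. Two points of execution deserve correction, though.

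First, a small slip: at $t=0$ the term $-\varepsilon^2 t/(1+t^2)^2$ vanishes, so the value of the bracket at the origin is $\varepsilon d-2\beta$, not $\varepsilon d-2\beta-\varepsilon^2$. Consequently $\kappa_a=\varepsilon d-2\beta$ once the infimum is shown to sit at $t=0$, and the final bound reads $c_{LSI}(\mu)\le 4e^{\varepsilon\pi/4}/(\varepsilon d-2\beta)$. (Likewise, after clearing denominators the $t^2$-coefficient of the quartic is $4+\beta\varepsilon$ -- or $4+\beta$ after the paper's specific choice of $\varepsilon$ -- not $4-2\beta$; the extra $\beta$-contribution comes from the drift term $+\varepsilon\beta\, t/(1+t^2)$ and helps rather than hurts.)

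Second, and more substantively: recycling the Subbotin convexity bound $\varepsilon\le 8/(3\sqrt3(d+1))$ will \emph{not} recover the full interval $\beta\in(0,1/2)$. Compatibility with $\varepsilon>2\beta/d$ then forces $\beta<4d/(3\sqrt3(d+1))$, which at $d=1$ is only $\beta<2/(3\sqrt3)\approx 0.385$. The paper sidesteps this by not optimising at all: it simply \emph{fixes} $\varepsilon=2/(d+1)$, and the condition $\varepsilon>2\beta/d$ then reads $\beta<d/(d+1)$ for every $d\ge 1$, i.e.\ exactly $\beta<1/2$. With this choice one bounds the quartic from below by $2t^4-2t^3+4t^2-2t+1+\beta$, checks this is positive on $\R_+$, and obtains $\kappa_a=2d/(d+1)-2\beta$, hence $c_\beta=4e/(1-2\beta)$. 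So your plan is right, but for the stated range of $\beta$ you should take $\varepsilon=2/(d+1)$ from the outset rather than import the Subbotin upper bound.
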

A suitable constant is here $c_\beta=\frac{4e}{1-2\beta}$, the blow-up when $\beta\to 1/2$ is a computation artefact and has, to our knowledge, no qualitative significance.  
\begin{proof}
This proof is very similar to the previous one. In particular, we set for any $x\in\R^d$
\[a(x)=\exp\left(\frac{\varepsilon}{2}\arctan(|x|^2)\right),\]
so that, for $t=|x|^2$,
\[\kappa_a=\inf\limits_{t\geq0}\left(2t-2\beta+\varepsilon\frac{d+t^2(d-4)}{(1+t^2)^2}-\varepsilon(t-\beta)\frac{t}{1+t^2}-\varepsilon^2\frac{t}{(1+t^2)^2}\right).\]
Again, we aim to show that this infimum is equal to $\varepsilon d-2\beta$, reached for $t=0$, which amounts to prove that, for any $t\geq0$,
\[g(t):=2t^4-\varepsilon(d+1)t^3+(4+\beta)t^2-\varepsilon(d+5)t+2-\varepsilon^2+\beta\geq0,\]
along with, to ensure positivity of $\kappa_a$, $\varepsilon>2\beta/d$.

The first necessary condition that arises is $\varepsilon\leq\sqrt{\beta+2}$. Moreover, in light of both previous proof and remark, $\varepsilon$ should be of order $\frac1{d+1}$. To make computations easier, we take $\varepsilon=\frac2{d+1}$. Plugging this into both conditions $\varepsilon>2\beta/d$ and $\varepsilon\leq\sqrt{\beta+2}$ imply that $\beta$ should not exceed $d/d+1$ for any $d$, which equates to $\beta<1/2$. To summarize, we have
\[\varepsilon=\frac2{d+1}\quad \mathrm{and}\quad 0\leq\beta<\frac12.\]
Under those assumptions, $g$ can be bounded from below as follows
\[g(t)\geq 2t^4-2t^3+4t^2-2t+1+\beta,\quad t\geq0.\]
The right-hand term is positive on $\R_+$, so that with this choice of $\varepsilon$, one has
\[\kappa_a=\frac{2d}{d+1}-2\beta.\]
This amounts, using Theorem \ref{thm_LSI}, 
\[c_{LSI}(\mu)\leq\frac{4(d+1)}{2d(1-\beta)-2\beta}e^{\frac{\pi}{2(d+1)}}.\]
The above is uniformly bounded with respect to $d\in\N^\star$, and one can take $c_\beta=\dfrac{4e}{1-2\beta}$ as the aforementioned universal constant.
\end{proof}
\begin{req}
Note that the restriction on $\beta$ is a computation artefact, and one has more $c_\beta\xrightarrow[\beta\to\frac12^-]{}+\infty$. Nevertheless, the behaviour in term of the dimension is similar to what was derived for the Subbotin distribution in Theorem \ref{thm_subb}. 
\end{req}

\begin{req}
As for the quadric distribution, the Holley-Stroock method provides somewhat similar results, yet computations are far more tedious in this case, particularly in keeping track of the dependency in $\beta$.

\end{req}

\section*{Acknowledgements}
The author is highly grateful to his PhD advisor Aldéric Joulin, for the introduction to the subject and all the interesting discussions, and to the referee for the numerous and very helpful comments. He also acknowledges the partial support of the grant ANR-18-CE40-0006 MESA funded by the French National Research Agency (ANR).

~\

\noindent\textbf{Contact informations}: UMR CNRS 5219, Institut de Mathématiques de Toulouse, Université Toulouse III Paul-Sabatier, Toulouse, France

\noindent E-mail: \verb;clement.steiner@math.univ-toulouse.fr;

\noindent URL: \verb?https://perso.math.univ-toulouse.fr/csteiner/?
\end{document}